\setlist[description]{leftmargin=0cm,  labelindent=\parindent}
\newcommand{\Hbar}[1]{\bar\gothH^{\left(#1\right)}}
\newcommand{\Hcan}[1]{\gothH^{\left(#1\right)}}
\title{Standard stable Horikawa surfaces}
\author[J. Rana]{Julie Rana}
 \address{Julie Rana\\Department of Mathematics, Lawrence University, 711 E. Boldt Way, Appleton WI 54911, USA.}
\email{ranaj@lawrence.edu}
 \author[S. Rollenske]{S\"onke Rollenske}
 \address{S\"onke Rollenske\\FB 12/Mathematik und Informatik\\
 Philipps-Univer\-si\-t\"at Marburg\\
 Hans-Meerwein-Str. 6\\
 35032 Marburg\\
 Germany}
 \email{rollenske@mathematik.uni-marburg.de}
\date{\today}
\begin{document}
\begin{abstract}
 We consider  the stable compactification $\bar \gothH$ of the moduli space of Horikawa surfaces with $K_X^2 = 2p_g(X) -4$.
 
 When $K_X^2 =8\ell$ we show that the closures of  the two components $\gothH^{\mathrm I}$ and $\gothH^{\mathrm {II}}$ of the Gieseker moduli space intersect, for $\ell>2$  in a divisor parametrising explicitly described semi-smooth surfaces.
 
 With growing $K_X^2$ we find an increasing number of generically non-reduced irreducible components in the same connected component of the moduli space of stable surfaces.
\end{abstract}
\subjclass[2010]{14J10, 14J17, 14J29}
\keywords{Horikawa surface, stable surface}

\maketitle
\setcounter{tocdepth}{1}
\tableofcontents

\section{Introduction}
For complex minimal surfaces of general type we have the classical Noether inequality 
$ K_X^2 \geq 2p_g -4$, see \cite[Ch. VII]{BHPV}.
In 1976 Horikawa classified surfaces satisfying equality in the first of a series of seminal papers \cite{horikawa1}; in his honor these surfaces are now called Horikawa surfaces.

Let us denote by $\gothH_{2k}$ the Gieseker moduli space of Horikawa surfaces with $K_X^2 = 2k$. Then Horikawa showed that $\gothH_{2k}$ is irreducible unless $2k\equiv 0 \mod 8$ in which case $\gothH_{2k} = \gothH^{\mathrm I}_{2k}\sqcup \gothH^{\mathrm{II}}_{2k}$ has two connected components of the same dimension. For $2k\equiv 8 \mod 16$ he showed that the two components parametrise non-diffeomorphic surfaces, but ever since it has remained open whether $\gothH^{\mathrm I}_{16\ell}$ and $\gothH^{\mathrm{II}}_{16\ell}$ parametrise diffeomorphic surfaces \cite{A06, FS97, LP11}.

Nowadays, the Gieseker moduli space $\gothH_{2k}$ embeds into a natural compactification $\bar\gothH_{2k}$, the moduli space of stable Horikawa surfaces, which parametrises stable surfaces with the same Hilbert polynomial (see \cite{kollar12, KollarModuli}). The starting point of the present work was the question of whether the closures $\bar\gothH^{\mathrm I}_{16\ell}$ and $\bar \gothH^{\mathrm{II}}_{16\ell}$ intersect inside $\bar \gothH_{16\ell}$.
\begin{custom}[Theorem A]
The intersection  $\bar\gothH^{\mathrm I}_{8\ell}$ and $\bar \gothH^{\mathrm{II}}_{8\ell}$
for $(\ell>1)$ 
contains a divisor $\gothD$ parametrising explicitly described non-normal (but semi-smooth) surfaces. The intersection is not normal crossing at the general point of $\gothD$. 
\end{custom}
We will prove this result in Theorem \ref{thm: connection} and Corollary \ref{cor: Thm A local structure}. 
 Horikawa surfaces  with $K_X^2 = 8$ are a bit of an exception because not all such surfaces are double covers of Hirzebruch surfaces.  We show in Section \ref{sect: connection 8}  that $\bar\gothH^{\mathrm I}_{8}$ and $\bar \gothH^{\mathrm{II}}_{8}$ intersect but do not have as good a control over the intersection locus. 

While Theorem A gives us an explicit description how to move from one component to the other, the standard tools in 4-manifold topology do not seem to be able to control the resulting surgery, so that the answer to the diffeomorphism question posed above remains elusive for now. Some further remarks on the case $K_X^2 = 16$ can be found in \cite{RR22b}.

The ingredients in the proof of Theorem A are some abstract deformation theory and explicit toric constructions. The latter is a lower-dimensional version of the scrollar deformations used by Coughlan and Pignatelli to study canonical threefolds of general type on the (3-dimensional) Noether line \cite{cp22}. It is remarkable that also in the case of threefolds every eighth instance of the moduli space has two irreducible components. In contrast to Theorem A, these components intersect in a locus parametrising  threefolds with canonical singularities.

The methods used to prove Theorem A
lead us to consider more general stable Horikawa surfaces. Surprisingly, we find with growing $K_X^2$ a tail of trailing irreducible components in the moduli space.
\begin{custom}[Theorem B]
Let $k\geq 5$.
The connected component of $\bar \gothH_{2k}$ containing classical Horikawa surfaces contains
\[ \bar\gothH_{2k}\supset \gothH_{2k} \cup\bigcup_{ \substack{k>m>\frac{k+4}{2}\\ m\equiv k \textrm{ mod } 2}} \Hbar{m}_{2k},\] 
where the $\Hbar{m}_{2k}$ are generically non-reduced, irreducible components of dimension $5k+4m+19> \dim \gothH_{2k}$. 
\end{custom}
We illustrate the phenomenon schematically in Figures \ref{fig: H26} and \ref{fig: H32}, the proof of Theorem B can be found in Section \ref{sect: proof B}. 
\begin{figure}
 \caption{Standard components in $\bar \gothH_{26}$}\label{fig: H26}
 \begin{tikzpicture}[thick]
   \draw[draw = red] (0,0) to[bend left] node[above]{ $ \gothH_{26}$}(2,0);
 \draw[draw = blue] (2,0) to ++ (0, 2 ) to[bend right] node[below] {$\Hbar{9}_{26}$} ++ (2,0) to ++(0,-2) to[bend left] cycle;
  \draw[draw = Plum] (4,2)  to ++ (1, 0.5) to[bend right] ++ (2,0) to ++(-1,-0.5);
  \draw[Plum] (6,0)  to ++ (1, 0.5) to ++ (0,2);
\draw[draw = Plum, dashed] (4,0) to ++ (1, 0.5) to[bend right] ++ (2,0);
\draw[draw = Plum, dashed] (4,0) ++ (1, 0.5) to++ (0,2);
 \draw[draw = Plum] (4,0) to ++ (0, 2 ) to[bend right] node[below, fill = white] {$\Hbar{11}_{26}$} ++ (2,0) to ++(0,-2) to[bend left] cycle; \end{tikzpicture}
\end{figure}
\begin{figure}
 \caption{Standard components in $\bar \gothH_{32}$}\label{fig: H32}
 \begin{tikzpicture}[thick]
   \draw[draw = red] (-2,0) to[bend left] node[above]{ $\bar \gothH^{\mathrm I}_{32}$}++ (2,0) node[below, color = black] {$\gothD$};
      \draw[draw = Orange] (0,0) to[bend left] node[above]{ $\bar \gothH^{\mathrm{II}}_{32}$}++(2,0);
 \draw[draw = blue] (2,0) to ++ (0, 2 ) to[bend right] node[below] {$\Hbar{12}_{32}$} ++ (2,0) to ++(0,-2) to[bend left] cycle;
 \draw[draw = Plum] (4,2)  to ++ (1, 0.5) to[bend right] ++ (2,0) to ++(-1,-0.5);
  \draw[draw = Plum] (6,0)  to ++ (1, 0.5) to ++ (0,2);
\draw[draw = Plum, dashed] (4,0) to ++ (1, 0.5) to[bend right] ++ (2,0);
\draw[draw = Plum, dashed] (4,0) ++ (1, 0.5) to++ (0,2);
 \draw[draw = Plum] (4,0) to ++ (0, 2 ) to[bend right] node[below,  fill = white] {$\Hbar{14}_{32}$} ++ (2,0) to ++(0,-2) to[bend left] cycle;
  \end{tikzpicture}
\end{figure}

\subsection*{Acknowledgments}
We would like to thank Jonny Evans, whose conversation and engaging blog post \cite{evans-blog} originally sparked our interest in this problem. 

Roberto Pignatelli and Stephen Coughlan introduced the second author to the scrollar deformations of \cite{cp22}, which simplified previous computations tremendously.  Some abstract and concrete aspects of deformation theory were discussed with Donatella Iacono and Hans-Christian von Bothmer. We are also grateful to Enrico Schlesinger for pointing out a useful reference.

The first author is partially supported by NSF LEAPS-MPS grant \#2137577. She would like to thank the second author for his time and support during a visit, funded by an AWM Mentoring Travel Grant, to Universit\"at Marburg.

 The second author is grateful for support of the DFG (Project number 509274422). He  would like to thank the first author and her family for the hospitality in Kathmandu, where some of the results were first conceived.

\section{Standard stable  Horikawa surfaces } 
We work over the complex numbers. General references for the notions related to stable surfaces and their moduli are \cite{kollar12, KollarSMMP, KollarModuli}. 
All necessary information about double covers in this context can be found in \cite{alexeev-pardini12}.

Let $\IF_m$ be a Hirzebruch surface. We denote the negative curve by $\sigma_\infty$, so $\sigma_\infty^2 = -m$, and the class of a fibre by $\Gamma$. We also fix a section disjoint from $\sigma_\infty$, namely $\sigma_0 \in |\sigma_\infty + m \Gamma|$; in the toric model introduced later, $\sigma_0$ can be chosen to be invariant. 
 \begin{defin}
  A standard stable Horikawa surface of type $(m)$ is a double cover 
  \[ f\colon X \to \IF_m\]
  branched over $B \in |6\sigma_\infty + 2a\Gamma|$
such that $X$ has slc singularities and $K_X$ is ample.
We call it a classical Horikawa surface if $X$ has at most canonical singularities. 
  \end{defin}
\begin{lem}\label{lem: standard}
 A standard stable Horikawa surface of type $(m)$ exists for $a>2m+2$ and satisfies
 \[K_X^2  = 4a-6m-8, \qquad p_g(X) = 2a-3m-2,\]
 so $K_X^2  =  2p_g(X)-4$.
In addition:
\begin{enumerate}
 \item If $2a\geq 6m$ then the linear system has no base points and the general branch divisor is smooth and connected.
 \item If $6m>2a\geq 5m$ then $B = \sigma_\infty + B'$ and $B'$ moves in a base-point free linear system with $\sigma_\infty . B' = 2a-5m$.

In particular, the general branch divisor is smooth and disconnected for $2a = 5m$.
  \item If $5m >2a > 4m+4$ then the general branch divisor is $B = 2 \sigma_\infty + B'$ with $B'$ in the base-point free linear system $ |4 \sigma_0 + 2(a-2m)\Gamma|$. 
 In this case, the general  $X$ is non-normal with normal crossing singularities at the general point of $\inverse f (\sigma_\infty)$ and   $2(a-2m)$ pinch points.
 \end{enumerate}
 In particular, a classical Horikawa surface of type $(m)$ with $K_X^2 = 2k$ exists if and only if $m\leq\frac{k+4}2$ and $m$ and $k$ have the same parity. 
\end{lem}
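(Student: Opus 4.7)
My plan is to combine the Gorenstein double-cover formulas with a base-locus trichotomy on $\IF_m$ and a local analysis of semi-smooth surface singularities.

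First I would compute the numerical invariants and the ampleness condition. Setting $L=3\sigma_\infty+a\Gamma$ so that $B\in|2L|$, the cover is Gorenstein even when $X$ is non-normal, so $K_X=f^*(K_{\IF_m}+L)=f^*(\sigma_\infty+(a-m-2)\Gamma)$ using $K_{\IF_m}=-2\sigma_\infty-(m+2)\Gamma$. Squaring gives $K_X^2=4a-6m-8$; the splitting $f_*\omega_X=\omega_{\IF_m}\oplus\omega_{\IF_m}(L)$ together with Riemann--Roch on $\IF_m$ give $p_g(X)=h^0(\sigma_\infty+(a-m-2)\Gamma)=2a-3m-2$, so $K_X^2=2p_g-4$. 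Since $f$ is finite, $K_X$ is ample iff $\sigma_\infty+(a-m-2)\Gamma$ is ample on $\IF_m$; Nakai--Moishezon against the extremal rays $\sigma_\infty$ and $\Gamma$ reduces this to $a>2m+2$, which is precisely the existence hypothesis.

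Next I would analyze $|6\sigma_\infty+2a\Gamma|$ by writing a general member as $c\sigma_\infty+B'$ with $B'$ not containing $\sigma_\infty$. The inequality $2a-(6-c)m\geq 0$ forces $c=0,1,2$ in the three regimes, and in each case $|B'|$ is base-point-free (in (2) and (3) this is most transparent after rewriting $B'$ in $\sigma_0$-and-$\Gamma$ form). Bertini then produces a smooth $B'$ meeting $\sigma_\infty$ transversally in $B'\cdot\sigma_\infty$ points. The singularities of $X$ are then a local computation: in (1), $X$ is smooth; in (2), at each of the $2a-5m$ transverse intersections of $\sigma_\infty$ and $B'$ the cover is locally $z^2=xy$, an $A_1$ canonical singularity; in (3), the non-reduced component $2\sigma_\infty$ produces the local models $z^2=x^2$ (normal crossings along $\inverse f(\sigma_\infty)$) and $z^2=x^2y$ (pinch points) at each of the $2(a-2m)$ points of $\sigma_\infty\cap B'$. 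These are semi-smooth, hence slc, and combined with ampleness of $K_X$ they yield the standard stable surface.

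The classical Horikawa range is then a clean bookkeeping: canonical singularities occur precisely in regimes (1) and (2), i.e.\ $2a\geq 5m$, which via $k=2a-3m-4$ becomes $m\leq(k+4)/2$; integrality of $a$ further forces the parity $k\equiv m\pmod 2$. The main technical hurdle I expect is the slc check in case (3), since this steps outside the normal-cover theory; my plan is to identify the local model $z^2=x^2y$ as a pinch point and appeal to the classification of semi-smooth singularities together with the double-cover framework of~\cite{alexeev-pardini12}. A secondary subtlety is connectedness of the general smooth branch at $2a=6m$ in case (1), which should fall to an explicit $h^0$ inspection.
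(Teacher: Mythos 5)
Your proposal is correct and follows essentially the same route as the paper: the double-cover formula for $K_X$, the intersection and $h^0$ computations on $\IF_m$ giving $K_X^2$ and $p_g$, and then the trichotomy obtained by counting how often $\sigma_\infty$ is forced into the branch system — which is exactly the "standard computation on Hirzebruch surfaces" the paper invokes without detail. You simply spell out the Bertini/local-singularity and slc checks (nodes giving $A_1$ in case (2), normal crossings and pinch points in case (3)) that the paper leaves implicit.
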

  \begin{rem}\label{rem: which cases}
   Note that for $m = 0, 1, 2, 3$ only the first case can occur, while for $m = 4, 5, 6$ only the first two cases can occur. For $m\geq 7$ all three cases are possible. 
   
   Note also that $K_X^2$ is always even, and it is divisible by $4$ if and only if the type $(m)$ is even. 
     \end{rem}

 \begin{proof}
   The canonical divisor of $X$ is 
 \begin{align*}
 K_X &=  f^*\left(K_{\IF_{m}} + \frac 12 B\right)\\
   & = f^*\left(-2\sigma_\infty - (m+2)\Gamma + 3\sigma_\infty + a \Gamma\right)\\
    & = f^*\left(  \sigma_\infty + (a-m-2) \Gamma\right),
 \end{align*}
 and this bundle is ample if and only if it is positive on the pullback of $\sigma_\infty$ if and only if $a>2m+2$.
Then
 \[K_X^2 = 2\left(  \sigma_\infty + (a-m-2) \Gamma\right)^2 = -2m + 4 (a -m -2) = 4a-6m -8\]
and 
\[p_g(X) = h^0(\sigma_\infty + (a-m-2)\Gamma) = h^0(\ko_{\IP^1}(a-m-2)) + h^0(\ko_{\IP^1}(a-m-2))  = 2a-3m -2. 
\]
The rest of the claims rely on a standard computation on Hirzebruch surfaces, determining how often a particular linear system has to contain the negative section.
  \end{proof}

We denote the subset of the moduli space of stable Horikawa surfaces parametrising standard stable Horikawa surfaces with $K_X^2 = 2k$  of type $(m)$ by $\Hbar{m}_{2k}\subset\bar \gothH_{2k}$ and the subset of classical Horikawa surfaces of type $(m)$ by $\Hcan{m}_{2k} = \Hbar{m}_{2k} \cap \gothH_{2k}$.

The moduli space of stable surfaces also carries a natural scheme structure and if $\Hbar{m}_{2k}$ forms an open subset of an  irreducible component of $\bar\gothH_{2k}$ then we consider it with this scheme structure. Otherwise, we just consider it as a reduced subscheme of $\bar\gothH_{2k}$.

\begin{prop}\label{prop: dim of strata}
 The moduli spaces $\Hbar{m}_{2k}$ of standard stable Horikawa surfaces of type $(m)$ are irreducible and 
 \[ \dim \Hbar{m}_{2k} =
 \begin{cases}
 7k+28 & m=0\\
 7k+29-m & \frac{k+4}3 \geq m >0\\
      6k+2m +24 & \frac{k+4}2 \geq m > \frac{k+4}{3}\\
      5 k + 4m +19 &k > m > \frac{k+4}2\\
    \end{cases}.
\]
\end{prop}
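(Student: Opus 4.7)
The plan is to present $\Hbar{m}_{2k}$ as the quotient of an irreducible projective parameter space of branch divisors modulo $\mathrm{Aut}(\IF_m)$ and then read off the dimension via the orbit--stabiliser formula. Throughout I set $a = (k+3m+4)/2$, an integer since $k \equiv m \pmod 2$, and chosen so that $4a - 6m - 8 = 2k$.

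The first step is to argue that the double-cover data $(\IF_m, B)$ is canonically determined by $X$. In the classical cases (1) and (2) this follows because the canonical map of $X$ factors as $X \to \IF_m \hookrightarrow \IP^{p_g - 1}$; in the non-normal case (3) one runs the same argument on the normalisation $\tilde X$ and descends, using the formalism of double covers from \cite{alexeev-pardini12}. Consequently $\Hbar{m}_{2k}$ is identified with the orbit space of admissible $B$ under $\mathrm{Aut}(\IF_m)$.

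In each of the three ranges of $m$ from Lemma~\ref{lem: standard}, the admissible $B$ form an open subset of a projective space: the complete linear system $|6\sigma_\infty + 2a\Gamma|$ in case (1), the translate $\sigma_\infty + |5\sigma_\infty + 2a\Gamma|$ in case (2), and $2\sigma_\infty + |4\sigma_\infty + 2a\Gamma|$ in case (3). Each parameter space is irreducible and $\mathrm{Aut}(\IF_m)$ is connected, so $\Hbar{m}_{2k}$ is irreducible; its dimension is then $\dim(\text{parameter space}) - \dim \mathrm{Aut}(\IF_m)$ provided the action has generically finite stabiliser, which I would check by a brief case-by-case argument. The dimension of each linear system is given by the toric formula
\[
h^0\bigl(c\sigma_\infty + 2a\Gamma\bigr) = \sum_{i=0}^c (2a - im + 1), \qquad 2a \geq cm,
\]
which applies in all three cases by the constraints on $m$. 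Substituting $a = (k+3m+4)/2$ and using $\dim \mathrm{Aut}(\IF_m) = 6$ for $m=0$ and $m+5$ for $m \geq 1$ yields the four formulas after elementary arithmetic.

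The main obstacle is the canonical recovery of $(\IF_m, B)$ from $X$ in the non-normal case (3): the double-cover structure is not directly visible on the stable surface itself, and one has to pass through the normalisation, analyse its own canonical map, and descend the branch data back to $X$. A secondary check — that $\sigma_\infty$ appears in the base locus of $|6\sigma_\infty + 2a\Gamma|$ with exactly the multiplicity dictated by each case — is routine on Hirzebruch surfaces and does not require a genuinely new idea.
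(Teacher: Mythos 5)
Your proposal is correct and follows essentially the same route as the paper: parametrise the surfaces by their branch divisors on $\IF_m$, note irreducibility of that parameter space, and obtain the dimension as $\dim(\text{parameter space}) - \dim\Aut(\IF_m)$ via the same cohomology count. The only cosmetic differences are that the paper computes $h^0(\IF_m, 6\sigma_\infty+2a\Gamma)$ of the full linear system (whose vanishing summands automatically account for the fixed components $\sigma_\infty$ or $2\sigma_\infty$, giving the same numbers as your split into moving parts) and leaves implicit the recovery of $(\IF_m,B)$ from $X$ and the finiteness of generic stabilisers, which you rightly flag as the points needing a word of justification.
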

\begin{proof} Since the Picard group of a Hirzebruch surface does not contain $2$-torsion, the double cover is determined by its branch divisor. The condition that a double cover has semi-log-canonical singularities is open, compare \cite[Lemma 3.2]{anthes20}. Therefore, a complete family of standard Horikawa surfaces of type $(m)$ is parametrised by an open subset of the linear system $|6\sigma_\infty + 2a\Gamma|$ on $\IF_m$ where $2k = 4a-6m-8$,  so its image in the moduli space is irreducible as well. 

 The dimension of the linear system is a straightforward cohomology computation:
 \begin{align*}
 h^0(\IF_{m}, 6\sigma_\infty + 2a\Gamma) & =\sum_{i = 0}^6 h^0(\IP^1, \ko_{\IP^1}(-im + 2a))\\
  &= \sum_{i = 0}^4 (-im + 2a+1)+ \sum_{i = 5}^6 h^0(\IP^1, \ko_{\IP^1}(-im + 2a))\\
  \intertext{ and since   $2a>4m+4$ and $2a = k+3m+4$}
  &=(10a-10m+5)+ h^0(\ko_{\IP^1}(2a-5m))+h^0(\ko_{\IP^1}(2a-6m))\\
  &= \begin{cases}
      14a - 21 m +7 & 2a\geq 6m\\
      12a-15m + 6 & 6m >2a \geq 5m \\
      10 a -10m +5 & 5m > 2a>4m+4
     \end{cases}\\
  &= \begin{cases}
      7k+35 & k \geq 3m-4\\
      6k+3m +30 & 3m-4>k\geq 2m-4 \\
      5 k + 5m +25 &2m-4> k > m
     \end{cases}.
  \end{align*}
Then, because $h_0(\kt_{\IF_{m}}) = m +5$  for $m\geq 1$ and $h^0(\kt_{\IP^1\times \IP^1}) = 6$, computed in   \cite{horikawa1} or \cite[Appendix B]{Sernesi}, we have 
\[ \dim \Hbar{m}_{2k}  = \dim |B| - \dim \Aut \IF_m 
  =  
 \begin{cases}
 7k+28 & m=0\\
 7k+29-m & \frac{k+4}3 \geq m >0\\
      6k+2m +24 & \frac{k+4}2 \geq m > \frac{k+4}{3}\\
      5 k + 4m +19 &k > m > \frac{k+4}2\\
    \end{cases}
\]
as claimed. 
\end{proof}

With the above notation we can phrase some of Horikawa's original results as follows, see also \cite[VII.9]{BHPV}.
\begin{thm}[Horikawa \cite{horikawa1}]\label{thm: classical stuff}
Let $\gothH_{2k}$ be the moduli space of (classical) Horikawa surfaces with $K_X^2 = 2k = 2p_g(X) - 4$ for $k\neq 1, 4$.
\begin{enumerate}
\item If $k$ is odd, then 
\[ \gothH_{2k}  = \bigcup_{d = 0}^{\lfloor \frac{k+2}{4}\rfloor}\Hcan{2d+1}_{2k}\]
 is irreducible of dimension $7k+28$.
  \item If $k$ is even and  $2k \not \equiv 0 \mod 8$ then
 \[ \gothH_{2k}  = \bigcup_{d= 0}^{\lfloor \frac{k+4}{4}\rfloor}\Hcan{2d}_{2k}\]
 is irreducible of dimension $7k+28$.
  \item If $2k \equiv 0 \mod 8$ then $\gothH_{2k} = \gothH^{\mathrm I}_{2k} \sqcup \gothH^{\mathrm{II}}_{2k}$ has two connected and irreducible components, both of dimension $7k+28$: the general component
  \[ \gothH^{\mathrm I}_{2k} =  \bigcup_{d =0 }^{\frac{k}{4}}\Hcan{2d}_{2k}\]
  and the special component
  \[ \gothH^{\mathrm{II}}_{2k} = \Hcan{\frac{k+4}{2}}_{2k}.\]
  If $2k \equiv 8 \mod 16$ then smooth surfaces in the respective components are not diffeomorphic.   
 \end{enumerate}
\end{thm}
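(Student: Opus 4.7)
The plan is to combine the data of Lemma~\ref{lem: standard} and Proposition~\ref{prop: dim of strata} with a structural classification of classical Horikawa surfaces, specialisations between different scroll types, and a topological invariant to distinguish the two components in case (3); this reformulates Horikawa's original argument in the framework of the present paper.

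First I would establish the decomposition $\gothH_{2k} = \bigcup_m \Hcan{m}_{2k}$ over the admissible values of $m$ determined in Lemma~\ref{lem: standard}. The key input is Horikawa's analysis of the canonical map: for any minimal surface with $K_X^2 = 2p_g-4$, the canonical map $\phi_{K_X}\colon X \to \IP^{p_g-1}$ is generically $2{:}1$ onto a surface of minimal degree $p_g-2$, which is a rational normal scroll (a Hirzebruch surface $\IF_m$ in its scroll embedding, after resolving in the cone case). An adjunction computation on $\IF_m$ forces the branch divisor to lie in the class $6\sigma_\infty + 2a\Gamma$ with $a = (k+3m+4)/2$, so $X$ is the canonical model of a standard Horikawa surface of type $(m)$ in the sense of Lemma~\ref{lem: standard}. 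Enumerating the admissible $m$ of correct parity with $m \leq (k+4)/2$ gives the unions in (1)--(3), and Proposition~\ref{prop: dim of strata} gives their dimensions.

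To establish irreducibility in (1), (2) and of $\gothH^{\mathrm I}_{2k}$ in (3), I would exhibit specialisations $\Hcan{m+2}_{2k} \subset \overline{\Hcan{m}_{2k}}$ for consecutive admissible types whenever the dimensions from Proposition~\ref{prop: dim of strata} are compatible. These come from the irreducibility of the Hilbert scheme of rational normal scrolls of fixed degree in $\IP^{p_g-1}$, along which scrolls of type $(m+2)$ specialise from type $(m)$, together with a relative branch divisor in the Horikawa class carried along the specialisation. Iterating over consecutive $m$ starting from the smallest admissible $m$ then yields irreducibility of each stated component.

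The disconnection in case (3) is the main obstacle. The dimension equality $\dim \Hcan{(k+4)/2}_{2k} = 7k+28 = \dim \Hcan{0}_{2k}$ from Proposition~\ref{prop: dim of strata} shows $\Hcan{(k+4)/2}_{2k}$ is a distinct irreducible component, but showing it is a distinct \emph{connected} component requires a topological invariant of a smooth representative. Horikawa's invariant is the divisibility of $K_X$ in $H^2(X,\mathbb{Z})$ (equivalently, the parity of the intersection form), computed directly on the double-cover presentation and shown to differ between the two components. Verifying that this invariant is constant along flat families in $\gothH_{2k}$ that pass through canonical-singular members requires a simultaneous resolution of the $ADE$ singularities in the family, which is the technical heart of the argument; the parallel fact that specialisations from $\Hcan{(k+4)/2 - 2}_{2k}$ into $\Hcan{(k+4)/2}_{2k}$ must leave the classical locus (and so only produce meeting points inside $\bar\gothH_{2k}\setminus\gothH_{2k}$, in line with Theorem A) is a consistent reality check. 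The final non-diffeomorphism statement when $2k \equiv 8 \mod 16$ is a Seiberg--Witten computation and I would defer to the cited literature for it.
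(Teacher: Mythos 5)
Your reading of Horikawa's structure result is fine as far as the decomposition goes: the bicanonical/canonical-map analysis realises every classical Horikawa surface as a double cover of a scroll, giving the strata $\Hcan{m}_{2k}$ of Lemma \ref{lem: standard} with the dimensions of Proposition \ref{prop: dim of strata} (the paper itself only cites \cite{horikawa1} for this theorem and later recovers parts of it by deformation theory). However, your plan has two genuine gaps. The first is the irreducibility step: the chain of adjacent specialisations $\Hcan{m+2}_{2k}\subset$ closure of $\Hcan{m}_{2k}$ cannot exist in general, because by Proposition \ref{prop: dim of strata} the dimensions of the classical strata are \emph{not} monotone in $m$: in the range $\frac{k+4}{2}\geq m>\frac{k+4}{3}$ they equal $6k+2m+24$ and increase with $m$. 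For example $\dim\Hcan{6}_{20}=96>95=\dim\Hcan{4}_{20}$, and a locally closed irreducible set cannot lie in the closure of a disjoint one of strictly smaller dimension, so the iteration over consecutive types breaks exactly at the high non-special types. What must be shown there (and what Horikawa shows, and Proposition \ref{prop: deformation information}(i),(ii) recovers via the map $\alpha$) is that such surfaces deform directly to lower type; note also that for $6m>2a\geq 5m$ the dimension $h^0(\IF_m,6\sigma_\infty+2a\Gamma)$ jumps along the degeneration of scrolls, so ``carrying the branch divisor along'' is precisely the point that needs a cohomological argument, not a consequence of irreducibility of the Hilbert scheme of scrolls.

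The second, more serious gap is the disconnectedness in (3). You propose to separate $\gothH^{\mathrm I}_{2k}$ from $\gothH^{\mathrm{II}}_{2k}$ by a topological invariant of smooth members, but such an invariant exists only when $2k\equiv 8\bmod 16$; for $2k\equiv 0\bmod 16$ the smooth surfaces share all known topological invariants, and whether they are diffeomorphic is exactly the open problem recalled in the introduction, so no invariant can be invoked, while the theorem asserts disconnectedness for all $2k\equiv 0\bmod 8$. The correct mechanism is deformation-theoretic: for the special surfaces ($2a=5m$, $B=\sigma_\infty+B'$ with $B'$ disjoint from $\sigma_\infty$) every small deformation is again of type $\left(\frac{k+4}{2}\right)$ because $\alpha$ is an isomorphism (Proposition \ref{prop: deformation information}(iii)(a)); together with the semicontinuity of the type this makes $\Hcan{\frac{k+4}{2}}_{2k}$ open and closed in $\gothH_{2k}$, hence a connected component, with no appeal to topology. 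Finally, the non-diffeomorphism for $2k\equiv 8\bmod 16$ is not a Seiberg--Witten computation: it is the parity of the intersection form (spin versus non-spin), available to Horikawa in 1976; deferring to the literature is acceptable, but the attribution is off.
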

 Horikawa surfaces with $K_X^2 = 2$ and and $p_g  (X) = 3$ are double covers of the projective plane branched over a sufficiently general octic and some information on their stable degenerations can be found in \cite{anthes20}. 
The case $K_X^2 = 8$ will be discussed briefly in Section \ref{sect: connection 8}.

In the following we want to investigate how the subsets $\Hbar{m}_{2k}$ interact inside $\bar\gothH_{2k}$. We focus particularly on  the closures $\bar \gothH^{\mathrm I}$ and $\bar \gothH^{\mathrm{II}}$ of the special and general components in the cases where $K_X^2$ is divisible by $8$. 
As a byproduct, we will actually reprove most of Theorem \ref{thm: classical stuff}.

For later use we also note the following.
\begin{cor}\label{cor: increasing dimension trailing.}
 Fix $K_X^2 = 2k>8$. Then the dimensions of the non-classical subsets $\Hbar{m}_{2k}$ are strictly increasing:
 \begin{enumerate}
  \item If $k$ is odd then
  \[ \dim \gothH_{2k}< \dim \Hbar{2\lfloor \frac{k+2}{4}\rfloor+3}_{2k} < \dots < \dim \Hbar{k-2}_{2k}.\]
    \item If $k$ is even then
  \[ \dim \gothH_{2k}< \dim \Hbar{2\lfloor \frac{k+4}{4}\rfloor+2}_{2k} < \dots < \dim \Hbar{k-2}_{2k}.\]
 \end{enumerate}
\end{cor}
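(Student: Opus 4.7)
The plan is a direct substitution into the dimension formula of Proposition \ref{prop: dim of strata}. By definition each trailing type satisfies $m > \frac{k+4}{2}$, so one lies in the last case of the formula, which reads
\[\dim \Hbar{m}_{2k} = 5k + 4m + 19.\]
This is visibly strictly increasing in $m$. Since admissible trailing $m$ share the parity of $k$ and hence consecutive values differ by $2$, the dimension jumps by exactly $8$ at each step, establishing the chain $\dim \Hbar{m_0}_{2k} < \dots < \dim \Hbar{k-2}_{2k}$ in one stroke, where $m_0$ denotes the smallest trailing index.

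What remains is the first inequality $\dim \gothH_{2k} < \dim \Hbar{m_0}_{2k}$. Using $\dim \gothH_{2k} = 7k+28$ from Theorem \ref{thm: classical stuff}, the difference evaluates to
\[\dim \Hbar{m_0}_{2k} - \dim \gothH_{2k} = 4m_0 - 2k - 9,\]
so it is enough to verify that $m_0 \geq \frac{k+5}{2}$. I would run a short case check on $k \bmod 4$: this pins down whether $\frac{k+4}{2}$ is a non-integer, or an integer of right or wrong parity compared to $k$, and hence whether $m_0$ is the first or second integer of the correct parity strictly exceeding $\frac{k+4}{2}$. Unpacking the four cases yields $m_0 = \frac{k+5}{2}, \frac{k+7}{2}, \frac{k+8}{2}, \frac{k+6}{2}$ respectively, so the bound $m_0 \geq \frac{k+5}{2}$ is confirmed, with equality (hence a dimension gap of precisely $1$) exactly when $k \equiv 1 \bmod 4$.

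There is no real obstacle; the argument is pure bookkeeping. The only point worth being mindful of is that the corollary is genuinely vacuous for the smallest allowed values of $k$, namely $k \in \{5,6,7,8\}$, where the range $\frac{k+4}{2} < m \leq k-2$ of the correct parity happens to be empty. From $k = 9$ (odd case) and $k = 10$ (even case) onwards the chain is nonempty and the inequalities hold as claimed.
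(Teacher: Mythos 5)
Your proposal is correct and follows essentially the same route as the paper: identify the listed types as exactly the non-classical ones (so they fall in the last case $5k+4m+19$ of Proposition \ref{prop: dim of strata}) and compare with $\dim \gothH_{2k} = \dim \Hbar{0}_{2k} = 7k+28$. Your extra bookkeeping (the $k \bmod 4$ case check giving $m_0 \geq \frac{k+5}{2}$, the jump of $8$ between consecutive trailing types, and the vacuity remark for $5 \leq k \leq 8$) just makes explicit what the paper leaves to the reader.
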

\begin{proof}
 From Lemma \ref{lem: standard} one can check that the listed spaces are  exactly the ones containing no classical Horikawa surfaces. The rest follows by comparing their dimensions computed in Proposition \ref{prop: dim of strata} with $\dim \gothH_{2k} = \dim \Hbar{0}_{2k}$.
\end{proof}

  \section{Connecting \texorpdfstring{ $\gothH^{\mathrm I}$ and $\gothH^{\mathrm {II}}$}{the general and special component}}\label{sect: conection 1}

Recall, e.g. from \cite[Sect. 5.2]{cls2012},  that for any integer $\alpha$, the $\mathbb{Z}^2$-graded ring, with variables and weights
\[ \begin{pmatrix}
    t_0 & t_1 & x_0 & x_1 \\
    1& 1 & \alpha & \alpha-m\\
    0& 0& 1& 1
   \end{pmatrix}
\]
and irrelevant ideal $(t_0, t_1)\cap (x_0, x_1)$, is the Cox ring of the Hirzebruch surface $\mathbb{F}_m$.
The negative section $\sigma_\infty$ is given by $\{x_1=0\}$, the positive section $\sigma_0$ by $\{x_0=0\}$, and the fibers by vanishing of linear polynomials $\{f_1(t_0,t_1)=0\}$.

\subsection{Horikawa surfaces in weighted projective bundles}\label{sec: toric rep}

The fibration on $\mathbb{F}_m$ induces a pencil of genus two curves on any standard stable Horikawa surface $X$ of type $(m)$. Since a genus two curve can be canonically embedded in $\mathbb{P}(1,1,3)$, we can thus describe such a surface $X$ with $K_X^2=2k$ as a hypersurface in a toric variety $T_{m,k}$, which is a $\mathbb{P}(1,1,3)$ bundle over $\mathbb{P}^1$ that varies depending on $m$ and $k$. 

In this section, we suppose that $k$ is even (and therefore $m$ as well by Lemma~\ref{lem: standard}), and so $K_X^2\equiv 0$ mod 4. We will treat the case that $k$ is odd in Section \ref{sect: connection odd}. To simplify exposition, we let $m=2d$, and $k=2n$. Then the surfaces we want to describe are hypersurfaces in the toric threefold $T_{2d,2n}$ (defined for $0\le 2d \le n+2$) given by 
 \[ \begin{pmatrix}
    t_0 & t_1 & x_0 & x_1 &  z\\
    1& 1 & d-n-2 & -d-n-2 &  -2(n+2)\\
    0& 0& 1& 1&  3
   \end{pmatrix}
\]
with irrelevant ideal $(t_0, t_1)\cap (x_0, x_1, z)$. The surface $X$ arises as a sufficiently general hypersurface of bidegree $\mat{ -4(n+2)\\ 6}$, and as such is defined by a polynomial $z^2+f(x_0,x_1,t_0, t_1)$, where we eliminate the linear term in $z$ by completing the square. Only the first entry of the degree vector is relevant to determine which monomials appear in  $f(x_0,x_1,t_0, t_1)$ ; denoting it by $\deg_1$ we have for example $\deg_1(z^2)=-4(n+2)$. 

Let us consider three examples corresponding to the lowest and highest possible values of $d$, in terms of $n$:
\begin{exam}\label{ex: general}
 If $d = 0 $ then the matrix of weights becomes
  \[ \begin{pmatrix}
    t_0 & t_1 & x_0 & x_1 & z\\
    1& 1 & -n-2 &-n-2 & -2(n+2)\\
    0& 0& 1& 1& 3
   \end{pmatrix}
\]
In this case, the  monomials appearing in $f$ are of the form  $x_0^ax_1^{6-a} g_{2(n+2)}(t_0, t_1)$, which are bihomogeneous when considered in the usual grading. We thus recognize $X$ as a double cover of $\mathbb{F}_0=\mathbb{P}^1\times\mathbb{P}^1$.
\end{exam}
The case of particular interest is the following, which for a choice of coefficients gives a key example of a singular stable Horikawa surface in $\bar\gothH_{2k}^\mathrm{II}$.

\begin{exam}\label{exam: special}
If $n$ is even and $d = \frac{n+2}{2}$, then the  weight matrix is 
 \[ \begin{pmatrix}
    t_0 & t_1 & x_0 & x_1 & z\\
    1& 1 & -(\frac{n}{2}+1)&-\frac{3}{2}(n+2)& -2(n+2)\\
    0& 0& 1& 1& 3
   \end{pmatrix}
\]
and we see that $ \deg_1 x_0^6 = -3n-6> \deg_1 z^2 =\deg _1 x_0^5x_1$. Since multiplying with a polynomial in $t_0, t_1$ can only increase the degree, a general polynomial in the linear system is of the form 
\begin{equation}\label{eq: special smooth}  z^2 - x_1(\mu x_0^5 +\dots), 
 \end{equation}
where $\mu$ is a nonzero constant. The branch divisor then contains $\sigma_\infty = \{ x_1 = 0 \}$ once and the rest is disjoint from $\sigma_{\infty}$ because the  term $(\mu x_0^5 +\dots)$ has $x_0^5$ with non-vanishing coefficient.

If we eliminate the variable $z$, the remaining matrix describes the Hirzebruch surface $\IF_{n+2}$, and so stable hypersurfaces given by such equations lie in the special component $\bar \gothH_{4n}^\mathrm{II}$.
\end{exam}

\begin{exam}\label{exam: divisor D}
In the same ambient toric threefold and linear system  as in Example \ref{exam: special} we now consider the hyperplane  of surfaces defined by an equation as in \eqref{eq: special smooth} where the coefficient $\mu$  vanishes, that is the polynomial is of the form 
\begin{equation}\label{eq of divisor} 
z^2 - x_1^2(x_0^4g_{n+2}(t_0, t_1) +\dots).
 \end{equation}
The general such  surface is stable: it is a double cover of $\IF_{n+2}$  branched over $2\sigma_\infty+B'$ where $B'$ is smooth and intersects the negative section transversally. Thus $X$ is smooth outside the non-normal locus, which is the pullback of the negative section. Over the general point of the negative section, $X$ has normal crossing singularities, locally $x_1^2 - z^2$, and at the $n+2$ intersection points of $\sigma_\infty $ and $B'$ the surface $X$ has pinch points, i.e., locally $z^2-tx_1^2$.

Together, these surfaces form a divisor $\gothD$ in $\Hbar{n+2}_{4n} \subset \bar \gothH_{4n}^\mathrm{II}$.
\end{exam}

\subsection{Connecting the general and the special component ($K_X^2>8$)}\label{sect: connection general special}
We assume in this section that $K_X^2=2k=4n\equiv 0 \mod 8$ and $n\geq 3$. Then by Theorem \ref{thm: classical stuff}  the Gieseker moduli space $\gothH_{4n}$ is the union of the two irreducible and connected components of dimension $14(n+2)$ containing general, respectively special surfaces, $\gothH_{4n} = \gothH_{4n}^{\mathrm I} \sqcup \gothH_{4n}^{\mathrm{II}}.$
We show that the closures of these components intersect in the stable compactification $\bar \gothH_{4n}$:
\begin{thm}\label{thm: connection}
If $K_X^2=4n\equiv 0$ mod 8 and $n\geq 3$, then the intersection $\bar\gothH_{4n}^{\mathrm{I}} \cap \bar \gothH_{4n}^{\mathrm {II}}$ contains the divisor $\gothD$ of semi-smooth double covers of $\IF_{n+2}$ described in Example~\ref{exam: divisor D}.
\end{thm}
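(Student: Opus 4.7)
The inclusion $\gothD \subseteq \bar\gothH^{\mathrm{II}}_{4n}$ is essentially built into the construction in Example~\ref{exam: divisor D}: the linear system defining $\gothD$ is generically swept out by the smooth surfaces of type $(n+2)$ from Example~\ref{exam: special}, and by Theorem~\ref{thm: classical stuff} these form the special component $\gothH^{\mathrm{II}}_{4n} = \Hcan{n+2}_{4n}$. Passing to closures in $\bar\gothH_{4n}$ yields $\gothD \subseteq \bar\gothH^{\mathrm{II}}_{4n}$.

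The substantial content is the inclusion $\gothD \subseteq \bar\gothH^{\mathrm{I}}_{4n}$. The plan is, for a general $X_0 \in \gothD$, to exhibit a flat one-parameter family $\mathcal{X} \to \Delta$ of stable Horikawa surfaces with central fiber $X_0$ and with general fiber $X_t$ ($t \neq 0$) a smooth classical Horikawa surface of some type $(m)$ with $m \leq n$. By Theorem~\ref{thm: classical stuff} any such $X_t$ lies in $\gothH^{\mathrm{I}}_{4n}$, so separatedness of the moduli space yields $X_0 \in \bar\gothH^{\mathrm{I}}_{4n}$; letting $X_0$ vary over $\gothD$ then gives $\gothD \subseteq \bar\gothH^{\mathrm{I}}_{4n}$.

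The key technical ingredient for constructing such a family is a scrollar deformation of the ambient toric threefold, in the spirit of \cite{cp22}: concretely, a flat family $\pi \colon \mathcal{T} \to \Delta$ of $\IP(1,1,3)$-bundles over $\IP^1$ whose central fiber is $T_{n+2, 2n}$ and whose general fiber is $T_{m, 2n}$ for some even $m$ with $m \leq n$. This is the weighted projective bundle analogue of the classical flat degeneration $\IF_{m} \rightsquigarrow \IF_{n+2}$ between Hirzebruch surfaces. One then extends the defining equation $z^2 - x_1^2 h(x_0, x_1, t_0, t_1)$ of $X_0$ to a family of hypersurfaces in $\mathcal{T}$ of the appropriate bidegree so that, for $t \neq 0$, the branch divisor downstairs on $\IF_{m}$ becomes a smooth irreducible element of $|6\sigma_\infty + 2 a_m \Gamma|$, with $a_m$ determined by $K_{X_t}^2 = 4n$.

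The main obstacle is making this scrollar deformation sufficiently explicit to control both fibers of the hypersurface family: the factor $x_1^2$ in the equation of $X_0$ must be absorbed by the deformation of the ambient variety, rather than persisting as a factor in the equation of $X_t$. Geometrically, the two sheets of $X_0$, which meet along $\inverse f(\sigma_\infty)$, are separated by the deformation as $\sigma_\infty \subset \IF_{n+2}$ is moved into a general section of the deformed Hirzebruch surface. Once flatness and the claimed fiber structure are verified, $X_0 \in \bar\gothH^{\mathrm{I}}_{4n} \cap \bar\gothH^{\mathrm{II}}_{4n}$ as required.
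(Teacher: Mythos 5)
Your overall strategy is the same as the paper's: the inclusion $\gothD\subset\bar\gothH^{\mathrm{II}}_{4n}$ via the parameter $\mu$ of Example~\ref{exam: special} is exactly what the paper does, and for the other inclusion the paper also degenerates the ambient $\IP(1,1,3)$-bundle (from $T_{0,2n}$, i.e.\ it takes $m=0$ in your notation) and realises a one-parameter KSBA family whose general fibre is a classical Horikawa surface of type $(0)\subset\gothH^{\mathrm I}_{4n}$. However, your proposal stops precisely where the proof has to begin: you posit a flat family $\mathcal T\to\Delta$ of ambient threefolds with special fibre $T_{n+2,2n}$ and assert that the equation $z^2-x_1^2h$ of a general $X_0\in\gothD$ can be ``extended'' so that the factor $x_1^2$ is absorbed by the degeneration, but you neither construct the degeneration nor verify that a general member of $\gothD$ really occurs as a flat limit of hypersurfaces on the nearby fibres. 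That verification is the substance of the theorem, and it is not automatic, because the limit of the relevant linear system is a \emph{proper} subspace of the linear system on the central fibre.

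Concretely, in the paper's construction the degeneration sits inside a toric fourfold with an extra variable $y_0$ and is cut out by $\lambda y_0=p_1x_0-p_0x_1$ with $p_0,p_1\in R_{\frac n2+1}$ coprime; the equations that extend over the family restrict on the central fibre to $z^2-f$ with $f\in\sum_{i\ge2}y_0^{6-i}y_1^i\cdot\Sym^i\langle p_0,p_1\rangle\cdot R_{(i-2)(\frac n2+1)}$. The multiplication map $\Sym^iV\cdot R_{(i-2)(\frac n2+1)}\to R_{(2i-2)(\frac n2+1)}$ is surjective only for $i\ge3$ (Lemma~\ref{lem: multiplication maps}); for $i=2$ one only reaches the $3$-dimensional space $\Sym^2\langle p_0,p_1\rangle$ inside $R_{n+2}$. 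Hence a single scrollar degeneration of the ambient space does \emph{not} reach the general point of $\gothD$: the degeneration must be adapted to $X_0$ by writing the degree-$(n+2)$ coefficient of $x_0^4x_1^2$ in its equation as $p_0p_1$ with $\gcd(p_0,p_1)=1$ (possible for general $X_0$), and the surjectivity for $i\ge3$ — a genuine cohomological computation on $\IP^1$ using the splitting of $\pi_*\ko_{\IP^1}$ and the Euler sequence — is what lets you match all remaining terms of the equation of $X_0$. Without these two ingredients your ``once flatness and the claimed fiber structure are verified'' is an assertion of the theorem rather than a proof. Finally, you should also justify that the general fibre of the hypersurface family can be taken smooth with $K$ ample (in the paper: add $\lambda$ times a general polynomial), so that it indeed lies in $\gothH^{\mathrm I}_{4n}$ by Theorem~\ref{thm: classical stuff}.
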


The general surface in $\gothD$ can be smoothed to a surface in $\gothH_{4n}^{\mathrm{II}}$ using the parameter $\mu$ in Example \ref{exam: special}. So it remains to show that it can be smoothed to the other component as well. 

We start by constructing an explicit family over $\mathbb{A}^1_\lambda$, depending on a choice of two polynomials $p_0,p_1 \in R_{(\frac{n}{2}+1)}$, with special fibre a singular surface described in Example~\ref{exam: divisor D} and general fiber a smooth surface of the type described in Example~\ref{ex: general}.

Consider the toric fourfold $T$ given by 
 \[ \begin{pmatrix}
    t_0 & t_1 & y_0 & x_0 & x_1 & z\\
    1& 1 & -(\frac{n}{2}+1) & -n-2 & -n-2 & -2(n+2)\\
    0& 0& 1& 1& 1& 3
   \end{pmatrix}
\]
with irrelevant ideal $(t_0, t_1)\cap (y_0 , x_0, x_1, z)$.
Let $R =\IC[t_0, t_1]$ with the usual grading.

Inside $T$ let us first consider the family $T_\lambda$ over $\mathbb{A}^1_\lambda$ of threefolds given by 
\begin{equation}
 \lambda y_0 = p_1x_0 - p_0 x_1 \text{ with $p_i \in R_{(\frac{n}{2}+1)}$ and $\gcd(p_0, p_1) =1$.} 
 \end{equation}
If $\lambda\neq 0 $, then we eliminate the variable $y_0$ and obtain $T_{0,2n}$, the threefold considered in Example \ref{ex: general}. 

If $\lambda = 0 $ then we can introduce a new variable 
\[ y_1 = \frac{x_0}{p_0} = \frac{x_1}{p_1}, \quad \deg y_1 =\mat{-\frac{3}{2}(n+2)\\1} ,\]
because the $p_i$ cannot vanish simultaneously. The resulting equations
\begin{equation}
\label{eq; coord sub}
 x_0  = p_0 y_1, 
 \qquad x_1 = p_1 y_1
\end{equation}
eliminate the variables $x_0, x_1$ and we recover $T_{n+2, 2n}$, the toric threefold from Example \ref{exam: special}.

To define a family of hypersurfaces in $T_\lambda$ that restricts in the desired fashion, we need to analyse which elements in the relevant linear system on the central fibre are global on the family. That is, we need to describe all polynomials in $t_0, t_1, x_0, x_1, y_0$ of bidegree $\mat{-4(n+2)\\ 6}$. These are sums of elements of the subspaces 
\begin{equation}
 \label{eq: lin sys general}
 y_0^{6-i}\cdot \Sym^i\langle x_0, x_1\rangle \cdot R_{(i-2)(\frac{n}{2}+1)} \qquad (i\geq 2)
\end{equation}
because these have $\deg_1$ equal to $-(6-i)\cdot (\frac{n}{2}+1)- i \cdot 2(\frac{n}{2}+1) + (i-2)(\frac{n}{2}+1)= -4(n+2)$. Note that the cases $i =0, 1$ do not occur, because there are no polynomials of negative degree in $R$. 

Now we let $\lambda $ go to zero to see which monomials we get on the central fibre. To describe this, let $V = \langle p_0, p_1\rangle$. Then substituting \eqref{eq; coord sub} into \eqref{eq: lin sys general} we obtain on the special fibre all equations of the form $z^2 - f$ with $f$ in the subspace generated by  
\[
y_0^{6-i}y_1^i \cdot\Sym^i V \cdot R_{(i-2)(\frac{n}{2}+1)} \subset y_0^{6-i}y_1^i 
\cdot R_{(2i-2)(\frac{n}{2}+1)}
\qquad (i\geq 2)
.\]

\begin{lem}\label{lem: multiplication maps}
 Let $p_0, p_1\in R_{\frac{n}{2}+1}$ without common divisor and let $i\geq 3$. Then  the multiplication map 
 \[ \phi^{p}_i \colon  \Sym^i V \cdot R_{(i-2)(\frac{n}{2}+1)} \to  R_{(2i-2)(\frac{n}{2}+1)}\]
 is surjective.
\end{lem}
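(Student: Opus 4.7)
Set $r = \tfrac{n}{2}+1$. My first step would be to reformulate the claim ideal-theoretically: the image of $\phi_i^p$ is the degree-$(2i-2)r$ graded piece of the ideal $J := (p_0^a p_1^{i-a} : 0 \le a \le i) \subset R$, and $J$ coincides with the $i$-th power $(p_0,p_1)^i$. So surjectivity of $\phi_i^p$ is equivalent to showing that $R_{(2i-2)r} \subset (p_0,p_1)^i$.

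The hypothesis $\gcd(p_0,p_1) = 1$ in the UFD $R = \IC[t_0,t_1]$ forces $p_0, p_1$ to form a regular sequence (their common vanishing locus in $\mathbb{A}^2$ is at most the origin). The resulting Koszul resolution
\begin{equation*}
0 \to R(-2r) \to R(-r)^{\oplus 2} \to R \to R/(p_0,p_1) \to 0
\end{equation*}
yields the Hilbert series $H(R/(p_0,p_1),t) = (1-t^r)^2/(1-t)^2$, which has top degree $2r-2$. Equivalently,
\begin{equation*}
R_d = p_0\, R_{d-r} + p_1\, R_{d-r} \qquad \text{for every } d \ge 2r-1. \tag{$\ast$}
\end{equation*}

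To conclude, I would iterate $(\ast)$ a total of $i$ times, each application splitting every term into a $p_0$- and a $p_1$-multiple while decreasing the degree by $r$. After $i$ such steps one obtains
\begin{equation*}
R_{(2i-2)r} = \sum_{a=0}^{i} p_0^a p_1^{i-a}\, R_{(i-2)r},
\end{equation*}
which is precisely the image of $\phi_i^p$. The only thing to verify is that $(\ast)$ is available at each step, i.e., that every intermediate degree is at least $2r-1$; the lowest one encountered is $(i-1)r$, and $(i-1)r \ge 2r-1$ reduces to $(i-3)r \ge -1$, which holds whenever $i\ge 3$ and $r \ge 1$. There is no real obstacle here---the argument is a standard regular-sequence computation---and the hypothesis $i\ge 3$ reappears naturally as the precise point at which the iteration would otherwise run out of room.
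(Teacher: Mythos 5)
Your argument is correct, and it takes a genuinely different route from the paper. You reduce the statement to commutative algebra: the image of $\phi^p_i$ is the degree-$(2i-2)r$ piece of $(p_0,p_1)^i$ (with $r=\tfrac n2+1$), coprimality makes $p_0,p_1$ a regular sequence in $\IC[t_0,t_1]$, and the Koszul resolution gives that $R/(p_0,p_1)$ vanishes in degrees $\geq 2r-1$, i.e.\ $R_d=p_0R_{d-r}+p_1R_{d-r}$ there; iterating $i$ times from degree $(2i-2)r$ works precisely because the last application happens in degree $(i-1)r\geq 2r-1$, which is where $i\geq 3$ enters. The paper instead argues sheaf-theoretically: it views $\Sym^iV$ as the pullback of hyperplane sections along $\IP^1\xrightarrow{\pi=(p_0:p_1)}\IP^1\xrightarrow{v_i}\IP^i$, identifies the cokernel of $\phi^p_i$ with an $H^1$ of the pullback of $\Omega_{\IP^i}(1)$ suitably twisted, and kills it using the Grothendieck splitting $\pi_*\ko=\ko\oplus\ko(-1)^{\oplus n/2}$ together with the splitting of the restricted cotangent bundle on the rational normal curve. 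Your approach is more elementary and self-contained (no Euler sequence, no splitting results), and it makes the failure at $i=2$ visible as the iteration running out of room; the paper's cohomological formulation identifies the cokernel intrinsically (so one can read off its size when it is nonzero) and fits the scrollar-deformation framework used elsewhere in the paper, but for the statement as given your proof is complete.
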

\begin{proof}
Let $A = A' = \IP^1$. We have maps
 \[ 
  \begin{tikzcd}[column sep = large]
   A'\rar {\pi = (p_0:p_1) }& A \rar{ v_i} & \IP^i 
  \end{tikzcd}
 \]
where $v_i$ is the Veronese embedding of $\mathbb{P}^1$ into $\mathbb{P}^i$ as a rational normal curve.  Note that the $\ko_A$-algebra  $\pi_*\ko_{A'}$ is a torsion-free coherent sheaf on $\IP^1$, hence a vector bundle. The trace map splits off a trivial summand, and by Grothendieck's Lemma (see e.g.  \cite[Cor. 5.2.8]{Huybrechts}) the whole bundle is a direct sum of line bundles.  Since the cohomology of $\pi_*\ko_{A'}$ gives the cohomology of $\ko_{A'}$ we have 
\[ \pi_*\ko_{A'} = \ko_A \oplus \ko_A(-1)^{\oplus \frac{n}{2}}\]
 We then interpret 
 \[\Sym ^i V = \pi^*H^0(A, \ko_A(i)) = \pi^*v_i^*H^0(\IP^i, \ko_{\IP^i}(1))\text{ and }R_{\alpha(\frac{n}{2}+1)} = H^0(A', \pi^*\ko_{A}(\alpha)).\]
Restricting the Euler sequence of $\IP^i$ to the rational normal curve and pulling back to $A'$ we get
\[ 0 \to \pi^*v_i^*\left(\Omega_{\IP^i}(1)\right)\tensor \pi^*\ko_A(i-2) \to \Sym^i V \tensor \pi^*\ko_A(i-2) \to \pi^*\ko_A(2i-2)\to 0 \]
and find by taking global sections that the cokernel of the multiplication map is 
\begin{align*}
 \coker \phi^{p}_i & = H^1\left(A', \pi^*v_i^*\left(\Omega_{\IP^i}(1)\right)\tensor \pi^*\ko_A(i-2) \right)\\
 & = H^1\left(A, v_i^*\left(\Omega_{\IP^i}(1)\right)\tensor \ko_A(i-2) \tensor \pi_*\ko_{A'} \right)\\
 & =  H^1\left(A, v_i^*\left(\Omega_{\IP^i}\right)\tensor \ko_A(2i-2) \tensor \left(\ko_{A}\oplus \ko_A(-1)^{\oplus \frac{n}{2}} \right)\right),
 \end{align*}
which by \cite[Proposition 5A.2]{Bayer-Eisenbud} 
becomes
\begin{align*}
\coker \phi^{p}_i & =  H^1\left(A, \left(\ko_A(-i-1)^{\oplus i}\right)\tensor \ko_A(2i-2) \tensor \left(\ko_{A}\oplus \ko_A(-1)^{\oplus \frac{n}{2}} \right)\right)\\
 & =  H^1\left(A, \ko_A(i-3)^{\oplus i}\right)\oplus H^1\left(A,  \ko_A(i-4) ^{\oplus i\frac{n}{2}} \right)\\ 
 & = 0 \qquad (\text{for $i\geq 3$}).
\end{align*}
Note that for $i = 2$ it is quite obvious that the map is not onto.\end{proof}

Theorem \ref{thm: connection} now follows immediately from the slightly more precise result.

\begin{prop}
 Let $X_0$ be a hypersurface as in Example \ref{exam: divisor D} such that in equation (\ref{eq of divisor}), the polynomial $g_{2(\frac{n}{2}+1)} = p_0p_1$ where $p_i\in R_{\frac{n}{2}+1}$ and $\gcd(p_0,p_1) = 1$. Then $X_0$ can be smoothed to a Horikawa surface in $\gothH_{2n}^{\mathrm{I}}$.
\end{prop}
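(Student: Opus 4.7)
The plan is to construct an explicit one-parameter flat family of hypersurfaces in the toric fourfold $T$ set up above, with central fibre $X_0$ and with general fibre a smooth classical Horikawa surface of type $(0)$, hence lying in $\gothH^{\mathrm{I}}_{4n}$. The fourfold family $T_\lambda\to\mathbb{A}^1_\lambda$ already bridges the two relevant ambient threefolds --- $T_{n+2,2n}$ for $\lambda=0$ (Example~\ref{exam: special}) and $T_{0,2n}$ for $\lambda\neq 0$ (Example~\ref{ex: general}) --- so the task reduces to producing a single polynomial $F$ on $T$ of bidegree $\mat{-4(n+2)\\ 6}$ that restricts on each slice to the desired surface.

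First I would rewrite $X_0$ in the $T_0\cong T_{n+2,2n}$ coordinates via the identification $y_0\leftrightarrow x_0$, $y_1\leftrightarrow x_1$ from Example~\ref{exam: special}: equation \eqref{eq of divisor} becomes $z^2=\sum_{i=2}^{6}y_0^{6-i}y_1^i\, g_{(i-1)(n+2)}(t_0,t_1)$, the hypothesis being $g_{n+2}=p_0p_1$. Then I would lift to $T$: find polynomials $P_i(t_0,t_1,x_0,x_1)\in\Sym^i\langle x_0,x_1\rangle\cdot R_{(i-2)(n/2+1)}$ for $i=2,\dots,6$ such that the substitution $x_0=p_0y_1$, $x_1=p_1y_1$ valid on $T_0$ sends $P_i$ to $y_1^i\, g_{(i-1)(n+2)}$. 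This amounts to solving $\phi_i^p(P_i)=g_{(i-1)(n+2)}$ for the multiplication maps of Lemma~\ref{lem: multiplication maps}. For $i\geq 3$ surjectivity of $\phi_i^p$ supplies such a $P_i$; for $i=2$ the image of $\phi_2^p$ is only the three-dimensional subspace $\Sym^2 V=\langle p_0^2,p_0p_1,p_1^2\rangle\subset R_{n+2}$, and the hypothesis $g_{n+2}=p_0p_1$ is exactly what places $g_{n+2}$ inside this image, allowing $P_2=x_0x_1$. Setting $F=z^2-\sum_{i=2}^{6}y_0^{6-i}P_i$ and restricting to $T_\lambda\to\mathbb{A}^1_\lambda$ gives a flat family of hypersurfaces by construction.

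To close the argument I would verify both fibres. The central fibre equals $X_0$ by the calibrated choice of the $P_i$'s, while for $\lambda\neq 0$ eliminating $y_0$ via the defining relation $\lambda y_0=p_1x_0-p_0x_1$ identifies $T_\lambda$ with $T_{0,2n}$ and exhibits $X_\lambda$ as a double cover of $\IF_0$ branched in a divisor of $|6\sigma_\infty+2a\Gamma|$. Since smoothness of the branch is an open condition and the further freedom in the choice of $P_i$ for $i\geq 3$ (via the kernels of $\phi_i^p$), combined with varying $\lambda$, already sweeps out a positive-dimensional family, for generic choices the branch is smooth, and by Lemma~\ref{lem: standard}(1) $X_\lambda$ is then a classical Horikawa surface of type $(0)$, hence a point of $\gothH^{\mathrm{I}}_{4n}$. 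The decisive obstacle --- and the reason the proposition restricts to those $X_0$ whose $g_{n+2}$ is reducible --- is the non-surjectivity of $\phi_2^p$ flagged at the end of Lemma~\ref{lem: multiplication maps}: only $X_0\in\gothD$ with $g_{n+2}$ factoring as a product of coprime polynomials of degree $n/2+1$ admit such a toric lift, and so the method is as sharp as possible in this direction.
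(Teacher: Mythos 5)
Your overall strategy is the paper's: work in the toric fourfold $T$, lift the equation of $X_0$ from $T_0\cong T_{n+2,2n}$ to $T$ by using the surjectivity of $\phi^p_i$ for $i\geq 3$ (Lemma \ref{lem: multiplication maps}) for the lower-order terms and the factorisation $g_{n+2}=p_0p_1$ to supply the $i=2$ term $x_0x_1$, and then read off the fibres over $\lambda\neq 0$ as double covers of $\IF_0$. Up to the last step this matches the paper's proof, and the identification of the hypothesis $g_{n+2}=p_0p_1$ as exactly what compensates the failure of surjectivity for $i=2$ is correct.

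The gap is in the final smoothness claim. You argue that smoothness of the branch is open and that the kernels of the lifting maps for $i\geq 3$ together with $\lambda$ sweep out a positive-dimensional family, hence a generic member has smooth branch. Openness only helps once you know the family contains at least one fibre with smooth branch; positive-dimensionality by itself produces no such member. Moreover your family is genuinely constrained: after eliminating $y_0$ via $y_0=(p_1x_0-p_0x_1)/\lambda$, every term with $i<6$ is divisible by $q=p_1x_0-p_0x_1$, and changing $P_6$ within the kernel of the substitution $x_j\mapsto p_jy_1$ changes it only by a multiple of $q$ (that kernel is the ideal $(q)$). Hence the restriction of every branch divisor in your family to the curve $\{q=0\}\subset\IF_0$ is the same, pinned down by the central fibre; so these branch divisors are far from general in $|6\sigma_\infty+2(n+2)\Gamma|$, and producing a smooth member would require an actual argument (e.g. a Bertini analysis at the base points on $\{q=0\}$), which you do not give. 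The paper sidesteps this entirely by adding $\lambda$ times a general polynomial of bidegree $(-4(n+2),6)$ (for instance from the $i=6$ piece of \eqref{eq: lin sys general}, which involves no $y_0$): this leaves the central fibre untouched, while for $\lambda\neq 0$ it makes the branch divisor a general member of the full linear system on $\IF_0$, which is smooth by Lemma \ref{lem: standard}(1) and Example \ref{ex: general}. Adding that one line closes your argument; as written, the last step does not follow.
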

\begin{proof}
By assumption and Lemma \ref{lem: multiplication maps} we can choose monomials such that the surface defined in the toric fourfold $T$ by the equations
 \[  \lambda y_0 = p_1x_0 - p_0 x_1 \text{ and } z^2 + y_0^4x_0x_1 + (\text{terms not divisible by $y_0^4$})=0 \]
 degenerates over $\mathbb{A}^1_\lambda$ to $X_0$ in the family described above. After possibly adding $\lambda$ times a general polynomial, the general fibre will be smooth as claimed. 
\end{proof}

\subsection{Connecting $\gothH^\mathrm{I}_8$ and $\gothH^\mathrm{II}_8$}
\label{sect: connection 8}
Here we consider the last remaining case, Horikawa surfaces with $K_X^2 = 8$ and $p_g(X) = 6$. There are two new constructions here:
\begin{description}
 \item[Type $(\infty)$] $X$ is a double cover of the projective plane branched over a smooth curve of degree ten. 
 A quick dimension count shows that this family $\Hcan{\infty}_8$ has dimension $\dim |\ko_{\IP^2}(10)| - \dim \Aut(\IP^2) = 57$. 
 \item[Type $(4')$] $X$ is a double cover of the cone over a rational normal curve of degree four branched over the vertex and a quintic section.
 Thus we can realise $X$ as a (sufficiently general) hypersurface of degree $20$ in weighted projective space $\IP(1,1,4,10)$. 
 
 The main difference between this and the general case is that $K_{\IF_4} + \frac 12\left( 6\sigma_\infty + 20 \Gamma\right) $ is not ample on $\IF_4$  but only big and nef and contracts the negative section. So the double cover of $\IF_4$ branched over a smooth  $B\in |6 \sigma_\infty + 20 \Gamma|$ gives a smooth minimal Horikawa surface, but the preimage of the negative section is a $-2$ curve, which we need to contract to get the canonical model.

 Arguing as in Proposition \ref{prop: dim of strata} the family $\Hcan{4'}_8$ is of dimension $56$. 
 \end{description}

By \cite[Section 4]{horikawa1} again the moduli space has two connected components, namely,
  \[ \gothH^{\mathrm I}_{8} =  \Hcan{0}_{8}\sqcup \Hcan{2}_{8}
  \text{ and } \gothH^\mathrm {II}_{8} =  \Hcan{\infty}_{8}\sqcup \Hcan{4'}_{8}.
  \]
The main point is that every surface of type $(4')$ deforms to one of type $(\infty)$ by taking a suitable  double cover of the $\IQ$-Gorenstein smoothing of $\IP(1,1,4)$ to $\IP^2$.

  Note that in this case the dimensions of the two components are different, namely $\dim \gothH^{\mathrm I}_{8}=56$ and $\dim \gothH^\mathrm {II}_{8}=57$.

We will now prove:
\begin{prop}\label{prop: intersection 8}
 The closures $\bar\gothH^{\mathrm I}_{8}$ and $ \bar  \gothH^\mathrm {II}_{8}$ intersect in $\bar \gothH_8$. 
\end{prop}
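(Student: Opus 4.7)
The plan is to run the construction of Theorem \ref{thm: connection} also in the boundary case $n=2$ and correct the resulting failure of ampleness on the central fibre by passing to the relative canonical model. Concretely I would take the toric fourfold $T$ of Section \ref{sect: connection general special} with $n=2$ and a general pair $p_0, p_1 \in R_2 = \IC[t_0,t_1]_2$ with no common root. Lemma \ref{lem: multiplication maps} still applies for $i \geq 3$, so the argument of Theorem \ref{thm: connection} produces a flat family $X_\lambda \to \mathbb{A}^1_\lambda$ inside $T_\lambda$ whose generic fibre is a smooth classical Horikawa surface of type $(0)$, hence in $\gothH^{\mathrm I}_8$, and whose special fibre $X_0$ is a semi-smooth double cover of $\IF_4$ branched over $2\sigma_\infty + B'$ with $B' \in |4\sigma_0 + 4\Gamma|$.

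The qualitatively new feature for $n=2$ is that $a = 2m+2 = 10$ on the central fibre, so $K_{X_0}$ is only big and nef: the reduced preimage $C = f^{-1}(\sigma_\infty)_{\mathrm{red}}$ is a $\IP^1$ with $C^2 = -2$ and $K_{X_0}\cdot C = 0$. Taking the relative canonical model of the family contracts $C$ in the central fibre to a non-normal slc point and yields a flat family $\bar X_\lambda \to \mathbb{A}^1_\lambda$ of stable surfaces with constant Hilbert polynomial; nothing changes for $\lambda \neq 0$, so $\bar X_\lambda \in \gothH^{\mathrm I}_8$ and therefore $\bar X_0 \in \bar\gothH^{\mathrm I}_8$.

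To obtain the opposite inclusion $\bar X_0 \in \bar\gothH^{\mathrm{II}}_8$ I would give a second smoothing of $X_0$ inside the same threefold $T_{4,4}$: reintroducing the coefficient $\mu$ of $x_1 x_0^5$ as in Example \ref{exam: special} gives a family whose member for $\mu \neq 0$ is a smooth double cover of $\IF_4$ branched over $\sigma_\infty + B''$ with $B''$ smooth and disjoint from $\sigma_\infty$, and whose canonical model is a type $(4')$ surface in $\gothH^{\mathrm{II}}_8$; for $\mu = 0$ we recover $X_0$, and passing to canonical models produces $\bar X_0$ as the central fibre of a family in $\bar\gothH^{\mathrm{II}}_8$. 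Hence $\bar X_0$ lies in both closures, proving the proposition.

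The main technical obstacle is to rigorously carry out the simultaneous contraction of the $(-2)$-curve in families, i.e.\ to invoke the existence of a relative canonical model for a family of slc surfaces with $K$ only big and nef. This is standard KSBA theory but requires care, and it also explains the weaker control on the intersection mentioned in the introduction: the non-normal slc singularity of $\bar X_0$ obtained by collapsing a $(-2)$-curve inside the non-normal locus is much less transparent than the semi-smooth surfaces in the divisor $\gothD$ of Example \ref{exam: divisor D}.
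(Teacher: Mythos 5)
Your overall route---run the $n=2$ case of the toric family from Section~\ref{sect: connection general special} and repair the central fibre by passing to its (log) canonical model, then smooth that model into the special component---is the same strategy as the paper's. However, your analysis of the central fibre, which is exactly the repair step, is incorrect. For $X_0$, the double cover of $\IF_4$ branched over $2\sigma_\infty + B'$ with $\sigma_\infty\cdot B'=4$, the preimage of $\sigma_\infty$ is the non-normal (double) locus of $X_0$, not a $(-2)$-curve on a smooth or semi-smooth surface: on the normalisation $\bar X_0$ (the double cover branched over $B'$ alone) the conductor is $E=\bar\pi^{-1}(\sigma_\infty)$, a double cover of $\IP^1$ branched at the four points of $\sigma_\infty\cap B'$, hence a smooth elliptic curve with $E^2=-8$, and $\nu^*K_{X_0}=K_{\bar X_0}+E$ has degree $0$ on $E$. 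The log canonical model of $X_0$ therefore contracts the whole double locus, and the result $Y_0$ is a \emph{normal} surface with one elliptic (strictly log canonical) singularity---not a surface with ``a non-normal slc point'' obtained by collapsing a rational $(-2)$-curve. The $(-2)$-curve picture you invoke (with $C\isom\IP^1$, $C^2=-2$) belongs to the nearby fibres of the $\mu$-family, where the branch divisor is $\sigma_\infty+B''$ with $B''$ disjoint from $\sigma_\infty$ and the cover is a smooth minimal surface of type $(4')$ before contraction; it does not describe $X_0$, which is not even normal along $C$. Consequently the ``main technical obstacle'' you set out to resolve---simultaneous contraction of a $(-2)$-curve in the family---is not the operation that is actually required.

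With the central fibre identified correctly, the argument closes as in the paper, and more cheaply than by invoking relative canonical models on the $\gothH^{\mathrm{II}}$-side: eliminating the extra variable on the central toric threefold realises $Y_0$ directly as a degree-$20$ hypersurface in $\IP(1,1,4,10)$, and deforming to a general such hypersurface gives a classical surface of type $(4')$, i.e.\ a point of $\gothH^{\mathrm{II}}_8$; together with the $\lambda$-family, whose general fibre is of type $(0)$ and hence in $\gothH^{\mathrm I}_8$, this places $Y_0$ in both closures. On the $\gothH^{\mathrm I}$-side one does still need to know that the stable limit of the $\lambda$-family is $Y_0$ (i.e.\ that passing to the relative log canonical model only modifies the central fibre), but the justification concerns the contraction of the elliptic double locus described above, not of a rational $(-2)$-curve, so your proposal as written would be attempting the wrong contraction.
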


\begin{rem}
 Arguing more carefully as in Section \ref{sect: connection general special} one can be a bit more precise: The intersection of the two 56-dimensional subsets $\Hbar{0}_{8}$ and $\Hbar{4'}_{8}$ 
 contains a subset $\gothD$ of dimension $55$,
 which is therefore a divisor in $\bar\gothH_8^\mathrm{I}$ and a subset of codimension two in $\bar\gothH_8^\mathrm{II}$. We illustrate this in Figure \ref{fig: H8}.
\end{rem}
\begin{figure}
 \caption{Standard strata in $\bar \gothH_{8}$}\label{fig: H8}
 \begin{tikzpicture}[thick]
 \draw[draw = blue] (2,0) node[left]{$\bar\gothH_8^{\mathrm{I}}$} to ++ (0, 2 )  to[bend right] node[below] {$\Hbar{0}_{8}$} ++ (2,0) to ++(0,-2) node[below]{ $\gothD$}to[bend left] cycle;
  \filldraw[fill = Plum!10!white, draw = Plum] (4,0) to ++ (0, 2 ) to[bend right]  ++ (2,0) to ++(0,-2) to[bend left] cycle;
  \draw[Plum] (4,2)  to ++ (1, 0.5) to[bend right] ++ (2,0) to ++(-1,-0.5) ;
  \draw[draw =Plum] (6,0)  to ++ (1, 0.5)  to node [left] {$\Hbar{\infty}_{8}$} ++ (0,2);
\draw[Plum, dashed] (4,0) to ++ (1, 0.5) to[bend right] ++ (2,0);
\draw[Plum, dashed] (4,0) ++ (1, 0.5) to++ (0,2) ;
 \draw[draw=Plum] (4,0) to ++ (0, 2 ) to[bend right] node[below, fill = Plum!10!white] {$\Hbar{4'}_{8}$} ++ (2,0) 
  to ++(0,-2) 
   to[bend left] cycle; 
 \node at (7, 0) {$\bar\gothH_8^{\mathrm{II}}$};
 \end{tikzpicture}
\end{figure}

\begin{proof} The aim is to  construct a surface in the closure of both components. In contrast to the previous section, this surface will be normal with one elliptic singularity.

Consider the toric fourfold $T$ given by 
 \[ \begin{pmatrix}
    t_0 & t_1 & y_0 & x_0 & x_1 & z\\
    1& 1 & 4 & 2 & 2 & 10\\
    0& 0& 1& 1& 1& 3
   \end{pmatrix}
\]
with irrelevant ideal $(t_0, t_1)\cap (y_0 , x_0, x_1, z)$.
Let $R =\IC[t_0, t_1]$ with the usual grading. 

Inside $T$ let us consider the family of threefolds $T_\lambda$ over $\mathbb{A}^1_\lambda$ given by 
\[ \lambda y_0 = p_1x_0 - p_0 x_1 \text{ with $p_0, p_1 \in R_{2}$ and  $p_0p_1$ without multiple zeros,} 
\] which
we intersect with a sufficiently general  hypersurface of bidegree $\mat{20\\ 6}$ given by 
\[ z^2 + x_0x_1y_0^4 + \text{lower order terms in $y_0$} + \lambda g(x_0, x_1, t_0, t_1) = 0 \]
to get a family of surfaces $X_\lambda$. 
If $\lambda\neq 0 $, then we eliminate the variable $y_0$ and find, up to a change of basis in the weight matrix,  the threefold $T_\lambda\isom T_{0,4}$ defined at the beginning of Section~\ref{sec: toric rep}. Thus $X_\lambda$ is a Horikawa surface of type $(0)$ for $\lambda \neq 0$.

Now let us consider the central fibre $\lambda = 0$. 
We introduce a new variable 
\[ y_1 = \frac{x_0}{p_0} = \frac{x_1}{p_1}, \quad \deg y_1 =\mat{0\\1} ,\]
because the $p_i$ cannot vanish simultaneously. The resulting equations
$ x_0  = p_0 y_1$, and $ x_1 = p_1 y_1$
eliminate the variables $x_0, x_1$ and we get the toric threefold $T_0$ with weights
 \[ \begin{pmatrix}
    t_0 & t_1 & y_0 & y_1 & z\\
    1& 1 & 4 & 0 & 10\\
    0& 0& 1& 1&  3
   \end{pmatrix},
   \]   
in which the equation for $X_0$ becomes
\[  z^2 + p_0p_1y_0^4y_1^2 + \text{(lower order terms in $y_0$)} = 0.\]
Translating back to geometry, i.e., projecting to the first four coordinates, $\pi\colon X_0\to \IF_4$ is a double cover branched over $2\sigma_\infty + B'$ where $B'$ intersects $\sigma_\infty $ transversally in four points, because $p_0p_1$ has no multiple zeros. 

Note that $K_{X_0} = \pi^* (\sigma_\infty + 4\Gamma)$ is not ample, because it is trivial on the non-normal locus, which is the preimage of $\sigma_\infty$. To understand the (log)-canonical model, let us take the normalisation, 
\[
 \begin{tikzcd}
  \bar X_0 \arrow {rr}{\nu} \arrow{dr}{\bar \pi}[swap]{\text{branched over $B'$}} && X_0 \arrow{dl}{\text{branched over $2\sigma_\infty+B'$}}[swap]{\pi}\\
  & \IF_4 
 \end{tikzcd}.
\]
Then $E = \inverse{\bar\pi} (\sigma_\infty)$ is a smooth elliptic curve with $E^2  = -8$ and $\nu^*K_{X_0} = K_{\bar X_0} + E$. Since all sections of $m (K_{\bar X_0} + E)$ are constant on $E$ we have $\pi^*\colon H^0(mK_{X_0}) \isom  H^0(m(K_{\bar X_0}+E))$ and the log-canonical model of $X_0$ is the log-canonical model of $(\bar X_0, E)$. The result is the surface $Y_0$, where we contract the elliptic curve $E$ to an elliptic singularity.

On the level of the ambient toric threefold $T_0$ this corresponds to eliminating the variable $y_1$, that is, the rational projection 
\[ T_0 \dashrightarrow \IP(1,1,4,10).\]
The resulting hypersurface $Y_0$ can be deformed to a general such hypersurface, which is a Horikawa surface of type $(4')$. Thus the surface $Y_0$ is in the closure of both 
  $\gothH^\mathrm{I}_8$ and $\gothH^\mathrm{II}_8$.
\end{proof}

\section{Connecting adjacent (non-classical) components}\label{sect: conection 2}
In this section, we use the toric construction described above to show that all standard stable Horikawa surfaces are contained in the same connected component of $\bar\gothH_{2k}$. The details of  the proof depend on the parity of $k$.

\subsection{Connecting $\Hbar{2d}_{4n}$ and $\Hbar{2d+2}_{4n}$}
We begin by assuming that $k$ is even, and take $k=2n$.
\begin{prop}\label{prop: connect to previous even}
 The subsets $\Hbar{2d}_{4n}$ and $\Hbar{2d+2}_{4n}$ are in the same connected component of $\bar\gothH_{4n}$.
\end{prop}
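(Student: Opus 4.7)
I plan to mirror the construction from the proof of Theorem~\ref{thm: connection}, now interpolating between adjacent types. Introduce the toric fourfold $T$ with variables and weights
\[ \begin{pmatrix}
    t_0 & t_1 & y_0 & x_0 & x_1 & z\\
    1& 1 & d-n-1 & d-n-2 & -d-n-2 & -2(n+2)\\
    0& 0& 1& 1& 1& 3
   \end{pmatrix} \]
and irrelevant ideal $(t_0, t_1) \cap (y_0, x_0, x_1, z)$. Choose $p_1 \in R_1$ and $p_0 \in R_{2d+1}$ with $\gcd(p_0, p_1) = 1$. The bihomogeneous equation $\lambda y_0 = p_1 x_0 - p_0 x_1$ defines a flat family $T_\lambda \to \mathbb{A}^1_\lambda$ of toric threefolds. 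For $\lambda \neq 0$, eliminating $y_0$ recovers $T_{2d,2n}$. For $\lambda = 0$, coprimality of $p_0, p_1$ allows the introduction of $y_1 = x_0/p_0 = x_1/p_1$ of weight $(-d-n-3,1)$, and the substitution $x_i = p_i y_1$ identifies $T_0$ with $T_{2d+2,2n}$.

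Next I would construct an explicit hypersurface of bidegree $(-4(n+2), 6)$ on $T$ of the form $z^2 + f(t_0, t_1, y_0, x_0, x_1) = 0$ whose restriction to $T_0$ is a stable surface in $\Hbar{2d+2}_{4n}$. Following the strategy of Section~\ref{sect: connection general special}, $f$ should contain a distinguished $y_0^4 x_0 x_1$-term (times an appropriate $t$-polynomial), which after substitution becomes $p_0 p_1 y_0^4 y_1^2$ on $T_0$ and realises on the double cover of $\IF_{2d+2}$ the characteristic branch divisor $2\sigma_\infty + B'$ of the non-normal semi-smooth type of Lemma~\ref{lem: standard}\,(3). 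The remaining summands, taken from the subspaces $y_0^{6-i}\cdot \mathrm{Sym}^i\langle x_0, x_1\rangle \cdot R_\ast$ for $i \geq 3$, are chosen so that the central fibre $X_0$ is slc with ample $K_{X_0}$, and so that after adding $\lambda$ times a generic polynomial of the appropriate bidegree, the general fibre $X_\lambda$ is a standard stable surface of type $(2d)$.

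The main technical step, as in Lemma~\ref{lem: multiplication maps}, is to identify the image on $T_0$ of the restriction map from hypersurfaces on $T$. Expanding in powers of $y_0$, this reduces for each $i \geq 3$ to the image of the multiplication map
\[ \phi_i\colon \bigoplus_{j=0}^i R_{e_j^{(i)}} \longrightarrow R_{\alpha(i)}, \qquad (g_j)_j \mapsto \sum_{j=0}^i g_j\, p_0^j p_1^{i-j}, \]
which is precisely the degree-$\alpha(i)$ piece of $(p_0, p_1)^i$. Using the Koszul resolution for the regular sequence $(p_0, p_1)$ of degrees $(2d+1, 1)$, one checks that $(p_0, p_1)^i_\alpha = R_\alpha$ whenever $\alpha \geq i(2d+1)$, which here translates to $2n + i \geq 6d + 2$. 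I expect verifying this numerical range—and covering the corner cases near $d \to n-1$ by a non-generic choice of $X_0$ whose required monomials lie in the known image of $\phi_i$—to be the main obstacle. Once the construction is in place, the family $X_\lambda$ sweeps out a connected curve in $\bar\gothH_{4n}$ meeting both $\Hbar{2d+2}_{4n}$ (at $\lambda = 0$) and $\Hbar{2d}_{4n}$ (for generic $\lambda$), whence the two subsets lie in the same connected component.
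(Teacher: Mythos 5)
Your setup coincides exactly with the paper's: the same weighted toric fourfold $T$, the same pencil $\lambda y_0 = p_1x_0-p_0x_1$ with $p_1\in R_1$, $p_0\in R_{2d+1}$ coprime, and the same identification of the general fibre with $T_{2d,2n}$ and the central fibre with $T_{2d+2,2n}$. But the step you defer -- actually writing down a hypersurface whose central fibre is stable -- is the whole content of the proposition, and the route you sketch for it breaks down precisely where the statement is not already known. After reducing (as the paper does) to the non-classical range $n\leq 2d<2n-2$, your distinguished monomial $y_0^4x_0x_1$ would need a $t$-coefficient of degree $2n-4d<0$ (except at the single boundary value $2d=n$), so it simply does not exist on $T$. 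Worse, a semi-smooth central fibre with branch $2\sigma_\infty+B'$ and $B'$ transverse to $\sigma_\infty$, as in Lemma \ref{lem: standard}(3), is unattainable: expanding in powers of $y_0$, the only contributions to the coefficient of $y_0^4y_1^2$ on the central fibre come from the $j=0$ piece (the $t$-degrees $e^{(2)}_j=2n-2d-2jd$ are negative for $j\geq 1$ when $2d>n$), so every available coefficient is divisible by $p_1^2$ and $B'$ necessarily meets $\sigma_\infty$ non-transversally. Finally, your Koszul criterion $\alpha(i)\geq i(2d+1)$, i.e.\ $2n+i\geq 6d+2$, fails for \emph{every} $i\leq 6$ as soon as $2d\geq n$ and $n\geq 5$; so the multiplication maps $\phi_i$ are far from surjective throughout the relevant range, and this is not a ``corner case near $d\to n-1$'' but the entire case that needs proving. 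Genericity inside the type-$(2d+2)$ linear system is therefore not available, and your plan to choose ``the remaining summands so that the central fibre is slc'' is exactly the unproved point.

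The paper resolves this by abandoning both semi-smoothness and surjectivity: it takes the completely explicit section
\[ f = g_{2n-2d}(t_0,t_1)\cdot x_1^2\cdot \prod_{i=1}^4\bigl(y_0+a_ix_0+b_ix_1\bigr),\qquad a_i\in R_1,\ b_i\in R_{2d+1}, \]
whose restriction to the central fibre is $g_{2n-2d}\,p_1^2\,y_1^2\prod_i\bigl(y_0+(a_ip_0+b_ip_1)y_1\bigr)$, so the branch curve on $\mathbb{F}_{2d+2}$ is a union of $2n-2d$ fibres, $2\sigma_\infty$, a double fibre, and four general sections in $|\sigma_0|$ disjoint from $\sigma_\infty$. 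One then checks directly that $(\mathbb{F}_{2d+2},\tfrac12 B)$ is log canonical; the resulting surface has $A_1$ points, pinch points and a degenerate cusp $z^2+x^2t^2$, so it is not semi-smooth, but it is a standard stable Horikawa surface of type $(2d+2)$, which suffices since $\Hbar{2d+2}_{4n}$ is connected. If you want to complete your argument, you should either reproduce such an explicit slc central fibre inside the (small) image of the restriction map, or make the reduction to the classical range explicit where your uniform construction does apply; as written, the main step is missing.
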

\begin{proof}
Note that the closure of the Gieseker moduli space of classical Horikawa surfaces is connected  by Horikawa's Theorem \ref{thm: classical stuff} and by Section \ref{sect: conection 1}, so we may assume that $\Hbar{2d+2}_{4n}$ is a component containing only non-classical surfaces; that is, $n\leq 2d <2n-2$.

Since both subsets are connected, it suffices to exhibit a (Gorenstein hence $\IQ$-Gorenstein) family of standard stable Horikawa  surfaces, where the general fibre is of type $(2d)$ and a special fibre is of type $(2d+2)$.

 Consider for $n\leq 2d <2n-2$ 
 the  toric fourfold $T$ given by 
 \[ \begin{pmatrix}
    t_0 & t_1 & y_0 & x_0 & x_1 & z\\
    1& 1 & d-n-1 & d-n-2 & -d-n-2 & -2(n+2)\\
    0& 0& 1& 1& 1& 3
   \end{pmatrix}
\]
with irrelevant ideal $(t_0, t_1)\cap (y_0 , x_0, x_1, z)$.
Let $R =\IC[t_0, t_1]$ with the usual grading. 

Inside $T$ let us first consider the family of threefolds $T_\lambda$ over $\mathbb{A}^1_\lambda$ given by 
\begin{equation}
 \lambda y_0 = p_1x_0 - p_0 x_1 \text{ with $p_1 \in R_{1}$, $p_0 \in R_{2d+1}$ and  $\gcd(p_0, p_1) =1$.} 
 \end{equation}
If $\lambda\neq 0 $, then we eliminate the variable $y_0$ and find the threefold $T_{2d,2n}$ defined at the beginning of Section~\ref{sec: toric rep}.

If $\lambda = 0 $ then we can introduce a new variable 
\[ y_1 = \frac{x_0}{p_0} = \frac{x_1}{p_1}, \quad \deg y_1 =\mat{-d-n-3\\1} ,\]
because the $p_i$ cannot vanish simultaneously. The resulting equations
\begin{equation}
\label{eq; coord sub with k}
 x_0  = p_0 y_1, 
 \qquad x_1 = p_1 y_1
\end{equation}
eliminate the variables $x_0, x_1$ and we get $T_{2d+2, 2n}$.

To prove that $\bar\gothH^{2d}_{4n}$ meets $\bar\gothH^{2d+2}_{4n}$ we need an equation of bidegree $\mat{-4(n+2)\\6}$ on $T$ of the form $z^2+f(t_0, t_1,y_0, x_0, x_1)$ that defines a stable surface if we set $\lambda = 0 $. 
Writing
\[\mat{-4(n+2)\\6} =  \mat{2n-2d\\0}+ 2 \mat{-d-n-2\\1}  + 4 \mat{d-n-1 \\1}\]
we can set
\[ f (t_0, t_1,y_0, x_0, x_1)= g_{2n-2d}(t_0, t_1)\cdot  x_1^2 \cdot \prod_{i = 1}^4\left( y_0 + a_i x_0+b_ix_1)\right),\]
for general $a_i\in R_{1}$ and $b_i\in R_{2d+1}$.

Upon intersection with $T_\lambda$ for $\lambda=0$, we obtain 
\[ f(t_0, t_1,y_0, p_0y_1, p_1y_1)  = g_{2n-2d}(t_0, t_1)p_1^2  y_1^2 \cdot \prod_{i = 1}^4\left( y_0 + (a_ip_0+ b_i p_1)y_1)\right).\]
For general choices (for example, choosing $p_0=t_0^{2d+1}$, $p_1=t_1$ and generic $a_i$, $b_i$), this equation defines a stable surface, because the branch curve $B$ is the union of $2n-2d$ fibres, twice the negative section, twice a fibre, and four sufficiently general sections in $|\sigma_0|$ that are disjoint from $\sigma_\infty$, thus $(\IF_{2d+2}, \frac 12 B)$ is a log-canonical pair,
compare e.g \cite{alexeev-pardini12}. 

More concretely, the double cover has (generically) $16(d+n+2)$ $A_1$ singularities, normal crossing singularities over the general point of the double locus, $2n-2d +4$ pinch points and a degenerate cusp with local equation $z^2 +x^2t^2$ over the point where $2\sigma_\infty$ meets the double fibre.  
This proves the claim. 
\end{proof}

\subsection{Connecting $\Hbar{2d+1}_{4n-2}$ and $\Hbar{2d-1}_{4n-2}$}\label{sect: connection odd}
We suppose that $k = \frac 12 K_X^2$ is odd, and take $k=2n-1$. Then by Lemma \ref{lem: standard} the type $(m)$ has to be odd as well.

To connect the components of standard stable Horikawa surfaces of odd type, we follow the same strategy  employed above: we realise the individual surfaces as hypersurfaces of a toric threefold and then connect these constructions inside a toric fourfold. 

\begin{exam}\label{ex: toric for k odd}
Consider for $0<d<n$ the toric threefold $T_{2d-1, 2n-1}$ described via
 \[ \begin{pmatrix}
    t_0 & t_1 & x_0 & x_1 & z\\
    1& 1 & d-n-1 & -d-n & -2n\\
    0& 0& 1& 1& 3
   \end{pmatrix}
\]
with irrelevant ideal $(t_0, t_1)\cap ( x_0, x_1, z)$.
A general equation of bidegree $\mat{ -4n\\ 6}$ without linear term in $z$ is of the form
\[ z^ 2 + \sum_{i = 0 }^6g_{2n+6d-i(2d-1)} x_0^ix_1^{6-i}\]
and thus defines a double cover of $\IF_{2d-1}$ branched over a curve  in $|6\sigma_\infty+ (2n+6d) \Gamma|$. By Lemma \ref{lem: standard}, we can describe all standard stable Horikawa surfaces of type $(2d-1)$ with $K_X^2 = 2(2n-1)$ in this way.
\end{exam}

\begin{prop}\label{prop: connect to previous odd}
 The subsets $\Hbar{2d+1}_{4n-2}$ and $\Hbar{2d-1}_{4n-2}$ are in the same connected component of $\bar\gothH_{4n+2}$. 
\end{prop}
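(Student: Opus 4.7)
The plan is to mimic Proposition~\ref{prop: connect to previous even} with the weight matrix adapted to odd types. By Theorem~\ref{thm: classical stuff} together with Section~\ref{sect: conection 1}, the closure of the Gieseker moduli space $\gothH_{4n-2}$ is connected, so we may assume that $\Hbar{2d+1}_{4n-2}$ consists entirely of non-classical surfaces. Since each $\Hbar{m}_{4n-2}$ is irreducible by Proposition~\ref{prop: dim of strata}, it suffices to produce a single Gorenstein (hence $\IQ$-Gorenstein) one-parameter family in $\bar\gothH_{4n-2}$ whose general fibre has type $(2d-1)$ and whose special fibre has type $(2d+1)$.

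For this I would use the toric fourfold $T$ with bigraded coordinates
\[\begin{pmatrix} t_0 & t_1 & y_0 & x_0 & x_1 & z\\ 1 & 1 & d-n & d-n-1 & -d-n & -2n\\ 0 & 0 & 1 & 1 & 1 & 3\end{pmatrix}\]
and irrelevant ideal $(t_0,t_1)\cap(y_0,x_0,x_1,z)$, and inside it the family $T_\lambda$ over $\IA^1_\lambda$ defined by
\[\lambda y_0 \;=\; p_1 x_0 - p_0 x_1, \qquad p_1 \in R_1,\ p_0\in R_{2d},\ \gcd(p_0,p_1)=1.\]
For $\lambda\neq 0$, eliminating $y_0$ recovers the threefold $T_{2d-1,2n-1}$ of Example~\ref{ex: toric for k odd}. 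For $\lambda=0$, the coprimality condition allows us to introduce $y_1=x_0/p_0=x_1/p_1$ of bidegree $\binom{-d-n-1}{1}$; the substitutions $x_0=p_0 y_1$, $x_1=p_1 y_1$ then eliminate $x_0,x_1$ and yield $T_{2d+1,2n-1}$.

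Using the decomposition
\[\binom{-4n}{6} \;=\; \binom{2n-2d}{0}+ 2\binom{-d-n}{1} + 4\binom{d-n}{1}\]
I would then take the hypersurface in $T$ of bidegree $\binom{-4n}{6}$ defined by $z^2+f=0$ with
\[f \;=\; g_{2n-2d}(t_0,t_1)\cdot x_1^2\cdot \prod_{i=1}^{4}\bigl(y_0+a_i x_0 + b_i x_1\bigr), \qquad a_i\in R_1,\ b_i\in R_{2d},\]
for general coefficients. On the central fibre, after substituting $x_0=p_0 y_1$ and $x_1=p_1 y_1$, this produces a branch divisor on $\IF_{2d+1}$ of the form $(2n-2d)\Gamma + 2\sigma_\infty + 2\{p_1=0\} + \sum_{i=1}^{4}\{y_0 + (a_ip_0+b_ip_1)y_1=0\}$, of class $6\sigma_\infty+(2n+6d+6)\Gamma$, which is exactly the Horikawa branch class for type $(2d+1)$ with $K^2=4n-2$ dictated by Lemma~\ref{lem: standard}.

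The main check — and the only real obstacle — is stability of the central fibre. This is handled exactly as in Proposition~\ref{prop: connect to previous even}: for sufficiently general parameters (for instance $p_0=t_0^{2d}$, $p_1=t_1$ and generic $a_i,b_i$) the reduced branch components are in general position, so the resulting double cover has only $A_1$ singularities at transverse intersections of reduced branch components, normal crossings over the generic point of $\sigma_\infty$, pinch points wherever a reduced component meets $\sigma_\infty$ or the double fibre, and a single degenerate cusp $z^2+x^2t^2$ above the meeting of $2\sigma_\infty$ with the double fibre; all of these are semi-log-canonical. Ampleness of $K$ on each fibre follows from the computation in Lemma~\ref{lem: standard} once $d\leq n-2$. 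The same analysis applied on $T_{2d-1,2n-1}$ shows the general fibre is a standard stable Horikawa surface of type $(2d-1)$, so the family $X_\lambda$ connects $\Hbar{2d-1}_{4n-2}$ and $\Hbar{2d+1}_{4n-2}$ inside $\bar\gothH_{4n-2}$ as required.
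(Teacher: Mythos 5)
Your proposal is correct and follows essentially the same route as the paper's proof: the same toric fourfold $T$, the same family $\lambda y_0 = p_1x_0 - p_0x_1$ with $p_1\in R_1$, $p_0\in R_{2d}$, the same degree decomposition and choice of $f = g_{2n-2d}\cdot x_1^2\cdot\prod_i(y_0+a_ix_0+b_ix_1)$, and the same reduction of the stability check to the argument of Proposition~\ref{prop: connect to previous even}. The extra details you supply (branch class $6\sigma_\infty+(2n+6d+6)\Gamma$, explicit $p_0=t_0^{2d}$, $p_1=t_1$, the singularity list, and the ampleness condition $d\leq n-2$) are consistent with the paper and only make the argument more explicit.
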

\begin{proof}
We exhibit a ($\IQ$-)Gorenstein family of standard stable Horikawa  surfaces, where the general fibre is of type $(2d-1)$ and a special fibre is of type $(2d+1)$. 

 Consider for $n+2\leq 2d <2n+2$
  the  toric fourfold $T$ given by 
 \[ \begin{pmatrix}
    t_0 & t_1 & y_0 & x_0 & x_1 & z\\
    1& 1 & d-n & d-n-1 & -d-n & -2n\\
    0& 0& 1& 1& 1& 3
   \end{pmatrix}
\]
with irrelevant ideal $(t_0, t_1)\cap (y_0 , x_0, x_1, z)$.
Let $R =\IC[t_0, t_1]$ with the usual grading. 

Inside $T$ let us consider the family of threefolds $T_\lambda$ over $\mathbb{A}^1_\lambda$ given by 
\begin{equation}
 \lambda y_0 = p_1x_0 - p_0 x_1 \text{ with $p_1 \in R_{1}$, $p_0 \in R_{2d}$ and  $\gcd(p_0, p_1) =1$.} 
 \end{equation}
If $\lambda\neq 0 $, then we eliminate the variable $y_0$ and find the toric threefold $T_{2d-1, 2n-1}$ from  Example \ref{ex: toric for k odd}.
If $\lambda = 0 $ then we can introduce a new variable 
\[ y_1 = \frac{x_0}{p_0} = \frac{x_1}{p_1}, \quad \deg y_1 =\mat{-d-n-1\\1} ,\]
because the $p_i$ cannot vanish simultaneously. The resulting equations
 \[x_0  = p_0 y_1, 
 \qquad x_1 = p_1 y_1
 \]
eliminate the variables $x_0, x_1$ and we find the threefold $T_{2d+1, 2n-1}$ 
with weights
 \[ \begin{pmatrix}
    t_0 & t_1 & y_0 & y_1 & z\\
    1& 1 & d-n & -d-n-1 & -2n\\
    0& 0& 1& 1& 3
   \end{pmatrix}
\]
To prove that $\bar\gothH^{2d-1}_{4n-2}$ meets $\bar\gothH^{2d+1}_{4n-2}$ we need an equation of bidegree 
$\mat{-4n\\6}$ 
on $T$ of the form $z^2+f(t_0, t_1,y_0, x_0, x_1)$ that defines a stable surface if we set $\lambda = 0 $. 
Writing
\[\mat{-4n\\6} =  \mat{2n -2d\\0}+ 2 \mat{-d-n\\1}  + 4 \mat{d-n \\1}\]
we can set
\[ f (t_0, t_1,y_0, x_0, x_1)= g_{2n-2d}(t_0, t_1)\cdot  x_1^2 \cdot \prod_{i = 1}^4\left( y_0 + a_i x_0+b_ix_1)\right),\]
for general $a_i\in R_{1}$ and $b_i\in R_{2d}$ and $g_{2n-2d}\in R_{2n-2d}$, because $n>d$. 

Upon intersection with $T_\lambda$ for $\lambda=0$, we obtain
\[ f(t_0, t_1,y_0, p_0y_1, p_1y_1)  = g_{2n-2d}(t_0, t_1)p_1^2  y_1^2 \cdot \prod_{i = 1}^4\left( y_0 + (a_ip_0+ b_i p_1)y_1)\right),\]
which defines a stable surface for sufficiently general choices, as in the proof of Proposition \ref{prop: connect to previous even}.
\end{proof}

\section{Infinitesimal deformations of standard stable Horikawa surfaces}

We start by considering some consequences of the general theory of deformations of maps (see e.g. \cite[Appendix C]{GLS}), which we will then apply to Horikawa surfaces. In full generality, the information of $\Def_A$,  deformations of an object $A$, is encoded in some cotangent complex $\IL_A^\bullet$ and the associated cohomology groups $T_A^i$  and sheaves $\kt_A^i$. 
More concretely, for a finite morphism 
\[ f \colon X \to W\]
we consider 
\begin{description}
    \item[$\Def_X$] Deformations of $X$,
    \item[$\Def_W$] Deformations of $W$,
    \item[$\Def_f$] Deformations of the map $f$ possibly varying both $X$ and $W$,
    \item[$\Def_{f/W}$] Deformations of the map $f$ preserving $W$
        \item[$\Def_{X\backslash f}$] Deformations of the map $f$ preserving $X$
        \item[$\Def_{X\backslash f/W}$] Deformations of the map $f$ preserving $W$ and $X$
\end{description}
The corresponding tangent cohomology groups (or sheaves) are intertwined in the cotangent braid of Buchweitz (compare \cite[p.~446]{GLS}), shown in  Figure \ref{fig: cotangent braid}.
\begin{figure}\caption{The cotangent braid of Buchweitz}\label{fig: cotangent braid}
\begin{equation}
 \label{eq: cotangent braid}
\begin{tikzcd}[row sep =  tiny]
 T^0_{X\backslash f} \arrow [bend right]{dd}[swap]{\mathbf{2}} \arrow {dr}{\mathbf{3}} 
  & &T^0_{f/W}\arrow{dl}[swap]{\mathbf {1}}\arrow[dashed, bend left]{dd}{\mathbf{4}}\\
 & T^0_f \arrow{dr}{\mathbf{3}} \arrow{dl}[swap]{\mathbf{1}} & \\
         T_W^0 \arrow{dr}{\mathbf{2}} \arrow[bend right]{dd}[swap]{\mathbf{1}}& & T^0_X\arrow[dashed]{dl}[swap]{\mathbf{4}} \arrow[bend left]{dd}{\mathbf{3}} \\
         & T^1_{X\backslash f/W}\arrow{dr}{\mathbf{2}} \arrow[dashed]{dl}[swap]{\mathbf{4}} & \\
          T^1_{f/W}\arrow[bend right, dashed]{dd}[swap]{\mathbf{4}} \arrow{dr}{\mathbf{1}} & & T^1_{X\backslash f} \arrow[bend left]{dd}{\mathbf{2}} \arrow{dl}[swap]{\mathbf{3}} \\
         & T^1_f \arrow{dr}{\mathbf{1}} \arrow{dl}[swap]{\mathbf{3}} & \\
       T^1_X \arrow[dashed]{dr}{\mathbf{4}} \arrow[bend right]{dd}[swap]{\mathbf{3}} & & T^1_W \arrow{dl}[swap]{\mathbf{2}} \arrow[bend left]{dd}{\mathbf{1}}[swap]{\alpha} \\
         & T^2_{X\backslash f/W} \arrow[dashed]{dr}{\mathbf{4}} \arrow{dl}[swap]{\mathbf{2}} & \\
         T^2_{X\backslash f} \arrow [bend right]{dd}[swap]{\mathbf{2}} \arrow {dr}{\mathbf{3}} 
  & &T^2_{f/W}\arrow{dl}[swap]{\mathbf {1}}\arrow[dashed, bend left]{dd}{\mathbf{4}}\\
 & T^2_f \arrow{dr}{\mathbf{3}} \arrow{dl}[swap]{\mathbf{1}} & \\
         T_W^2 \arrow{dr}{\mathbf{2}} \arrow[bend right]{dd}[swap]{\mathbf{1}}& & T^2_X\arrow[dashed]{dl}[swap]{\mathbf{4}} \arrow[bend left]{dd}{\mathbf{3}} \\
         & \dots& \\
                  \ldots & & \ldots \\
\end{tikzcd}
\end{equation}
 
\end{figure}

\begin{rem}
 All spaces that we consider are either smooth or local complete intersections, so that for deformation purposes it would be enough to work with the sheaf of K\"ahler differentials. But the deformations of $f$ without fixing source or target need another caliber of theory, which is the reason why we use the general machinery. 
 
 Very similar problems have been considered previously  for example in \cite{CvS06, MR2672280, MR1422433}, but we find it most transparent to start from scratch. 
\end{rem}

\begin{lem}\label{lem: deformation preserves map}
 Assume that $W$ is smooth and that
 \begin{enumerate}
  \item $H^0(W, \kt_W)\to H^0(W, f_*f^*\kt_W)$ is an isomorphism,
  \item $H^1(W, \kt_W)\to H^1(W, f_*f^*\kt_W)$ is an isomorphism,
  \item $H^2(W, \kt_W)=0$.
 \end{enumerate}
Then the natural forgetful maps $T_f^i\to T^i _X$ are isomorphisms for $i\leq 2$ and $\Def_f\isom \Def_X$. In other words,  every deformation of $X$ is induced by a deformation of the map $f$ in a unique way. 
\end{lem}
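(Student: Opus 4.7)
The plan is to apply two of the four long exact sequences of the braid~\eqref{eq: cotangent braid} to translate the cohomological hypotheses on $W$ into the desired isomorphism of deformation functors.

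First I would use sequence $\mathbf{3}$,
\[ \cdots \to T^i_{X\backslash f} \to T^i_f \to T^i_X \to T^{i+1}_{X\backslash f} \to \cdots, \]
which reduces the isomorphism $T^i_f \isom T^i_X$ for $i \le 2$ to the vanishing $T^i_{X\backslash f} = 0$ for $i \in \{0,1,2,3\}$. Granted this vanishing, standard tangent--obstruction theory upgrades the forgetful morphism of deformation functors $\Def_f \to \Def_X$ to an equivalence, since the isomorphism on $T^1$ (tangent spaces) and $T^2$ (obstruction spaces) is already in place.

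Next I would use sequence $\mathbf{2}$,
\[ \cdots \to T^i_{X\backslash f/W} \to T^i_{X\backslash f} \to T^i_W \to T^{i+1}_{X\backslash f/W} \to \cdots, \]
combined with the identifications $T^i_W = H^i(W, \kt_W)$ (from smoothness of $W$, so $\IL_W^\bullet \simeq \Omega_W$) and, since $f$ is finite,
\[ T^i_{X\backslash f/W} = H^i(X, f^*\kt_W) = H^i(W, f_*f^*\kt_W), \]
where the first equality comes from the standard description of deformations of a morphism fixing both source and target as sections of the pulled-back tangent sheaf, and the second from the exactness of $f_*$. Under these identifications hypotheses (1)--(3) say precisely that the pullback map $T^i_W \to T^i_{X\backslash f/W}$ is an iso for $i = 0, 1$ and that $T^2_W = 0$.

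To finish, I would compare sequence $\mathbf{2}$ with the long exact cohomology sequence of the adjunction short exact sequence
\[ 0 \to \kt_W \to f_*f^*\kt_W \to \mathcal{Q} \to 0 \]
on $W$, whose left map is injective because $\ko_W$ is a direct summand of $f_*\ko_X$ in characteristic zero. The expected outcome is an identification $T^i_{X\backslash f} \isom H^i(W, \mathcal{Q})$, after which hypotheses (1) and (2) immediately force $H^0(\mathcal{Q}) = H^1(\mathcal{Q}) = 0$; the higher vanishings for $i = 2, 3$ follow from $T^2_W = 0$ together with the dimension-based vanishing $H^{\ge 3}(\kt_W) = 0$ on the surface $W$. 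The main obstacle will be the precise compatibility of the braid's connecting maps with those of the adjunction sheaf sequence; this is most transparently handled by an explicit \v{C}ech-cocycle description of first-order deformations of $f$, or more systematically via the triangle $f^*\IL_W^\bullet \to \IL_X^\bullet \to \IL_{X/W}^\bullet$ of cotangent complexes and the $\Ext$-theoretic description of the $T^i$.
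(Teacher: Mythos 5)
Your strategy coincides with the paper's: reduce via sequence $\mathbf{3}$ of the braid \eqref{eq: cotangent braid} to the vanishing of $T^i_{X\backslash f}$, and obtain that vanishing from sequence $\mathbf{2}$ together with $T^i_W=H^i(W,\kt_W)$ and an identification of the groups $T^\bullet_{X\backslash f/W}$ with cohomology of $f^*\kt_W$. The gap is an index shift in that identification, and it is not harmless. In the braid the tangent space of the relative functor sits in degree one: $T^i_{X\backslash f/W}=H^{i-1}(X,f^*\kt_W)=H^{i-1}(W,f_*f^*\kt_W)$, since first-order deformations of $f$ fixing both $X$ and $W$ are sections of $f^*\kt_W$ (this is \cite[Prop.~3.4.2]{Sernesi}, as cited in the paper), not $H^i$. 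With the correct shift, the maps $T^i_W\to T^{i+1}_{X\backslash f/W}$ occurring in sequence $\mathbf{2}$ are exactly the pullback maps $H^i(\kt_W)\to H^i(W,f_*f^*\kt_W)$ of hypotheses (1) and (2); with your convention no map $T^i_W\to T^i_{X\backslash f/W}$ appears in the sequence you quote, so the hypotheses cannot be fed in as you state. The shift also breaks your last step: comparing with $0\to\kt_W\to f_*f^*\kt_W\to\mathcal{Q}\to 0$ gives $T^{i+1}_{X\backslash f}\isom H^i(W,\mathcal{Q})$, not $T^i_{X\backslash f}\isom H^i(W,\mathcal{Q})$; in particular, using (3) and $\dim W=2$, one finds $T^3_{X\backslash f}\isom H^2(W,f_*f^*\kt_W)$, a group the hypotheses do not control and which is in general nonzero for the double covers to which the lemma is applied. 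So the vanishing of $T^3_{X\backslash f}$ that you invoke ``by dimension reasons'' fails, and with it your route to surjectivity of $T^2_f\to T^2_X$.

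The repair is the paper's short argument and needs no auxiliary sheaf $\mathcal{Q}$: chasing sequence $\mathbf{2}$ directly, hypothesis (1) gives $T^0_{X\backslash f}=0$ and makes the connecting map into $T^1_{X\backslash f}$ zero, then (2) gives $T^1_{X\backslash f}=0$, and (2) together with (3) gives $T^2_{X\backslash f}=0$. Plugging into sequence $\mathbf{3}$ yields isomorphisms on $T^0$, $T^1$ and an injection $T^2_f\into T^2_X$, which (bijective on tangent spaces, injective on obstruction spaces) already forces $\Def_f\isom\Def_X$; genuine surjectivity on $T^2$ would in addition require $H^2(W,f_*f^*\kt_W)=0$, a point on which the paper's own proof is also silent. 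The compatibility you flag as the ``main obstacle'' --- that the connecting maps of sequence $\mathbf{2}$ are the natural pullback maps --- is indeed the one input you must quote, and it is part of the identification in \cite{Sernesi} (or \cite[App.~C]{GLS}).
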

\begin{proof}
From sequence $\mathbf3$ in \eqref{eq: cotangent braid} we see that it is enough to show that $T^i_{X \backslash f} = 0$ for $i = 0,1,2$. Using sequence $\mathbf2$ from \eqref{eq: cotangent braid} and the isomorphisms $T_W^i = H^i(W, \kt_W)$  and $T^i_{X
 \backslash f / W} = H^{i-1}(W, f_*f^*\ko_W)$ from  \cite[Prop. 3.4.2]{Sernesi}, the result follows. 
\end{proof}

Now we work out more specifically some groups and maps in the cotangent braid in the case of a double cover $f\colon X \to W$ branched over a divisor $B$. We will always assume that $W$ is smooth and now recall the standard theory from \cite[I.17]{BHPV}: we have   $f_* \ko_X = \ko_W \oplus \inverse \kl$ for some line bundle $\kl$ such that $\kl^{\tensor 2} \isom \ko_W(B)$.
Consider the geometric line bundle $\pi \colon |\kl|=\underline{\mathsf{Spec}}_W\left( \Sym^\bullet \inverse \kl\right) \to W$ and denote the tautological section in $\pi^*\kl$ by $z$. If $\sigma_B$ is the section defining the branch locus, then $X$ is the divisor in $|\kl|$ defined by the section $\sigma_X = z^2 - \pi^* \sigma_B$ of $\pi^* \ko_W(B)$. In particular, as a hypersurface in a smooth variety, $X$ is a local complete intersection.

On $X$ the section $\sigma_R = z$ is a square root of $\pi^*\sigma_B$ and as such defines the ramification divisor $R$.

\begin{prop}\label{prop: identifying alpha}
 Let $f \colon X \to W$ be a double cover of a smooth variety. In the notation above the following hold:
 \begin{enumerate}
  \item $\kt_{f/W}^i = 0 $ for $i \neq 1$ and $\kt_{f/W}^1 \isom \ko_X(f^* B)|_R\isom \ko_B(B)$ (under the identification $f\colon R \isom B$).
  \item If $h^1(\ko_W) = 0 $ then 
  \[ T^i_{f/W} = H^{i-1} (X, \kt_{f/W}^1) = 
 \begin{cases}
0 & i \neq 1, 2\\
H^0(X, \ko_X(B))/\langle \sigma_B\rangle & i = 1\\
H^1(B, \ko_B(B)) & i = 2
 \end{cases},
 \]
In other words, infinitesimal deformations of the map with fixed target $W$ are exactly given by deformations of the branch divisor. 
\item The map $T^1_W \to T^2_{f/W}$ in sequence  $\mathbf4$ in the cotangent braid \eqref{eq: cotangent braid} is under the identification
\begin{equation}\label{eq: connecting map}\alpha \colon  H^1(\kt_W) = T^1_W \to T^2_{f/W} = H^1(B, \ko_B(B))\end{equation}
induced by the map of sheaves $\alpha \colon \kt_W \to \ko_B(B)$ described in the following way: for a local vector field $\xi$ the section $\alpha(\xi)$ is the restriction to $B$ of the derivative of the equation of $B$ in direction $\xi$. That is,
\[ \alpha(\xi) = \left(\xi \lrcorner d\sigma_B\right)|_B = \xi(\sigma_B)|_B.\]
 \end{enumerate}
\end{prop}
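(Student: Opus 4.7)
My plan is to exploit the explicit hypersurface presentation $X = \{z^2 - \pi^*\sigma_B = 0\} \subset |\kl|$ to compute the relative (co)tangent complex of $f$, and then extract the three claims in order.

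For (1), the composition $X \hookrightarrow |\kl| \xrightarrow{\pi} W$ presents $X$ as a regularly embedded hypersurface in a smooth variety, so $\IL_{X/W}$ is the two-term complex $[\ko_X(-f^*B) \to f^*\kl^{-1}]$ in degrees $[-1,0]$, with differential equal to the vertical derivative of $z^2 - \pi^*\sigma_B$, i.e.\ multiplication by $2z \in H^0(X, f^*\kl)$. Dualising, the cohomology sheaves $\kt^i_{f/W}$ are computed from $f^*\kl \xrightarrow{2z} \ko_X(f^*B)$; since $z$ cuts out the ramification divisor $R$ as a regular section, the Koszul sequence
\[ 0 \to f^*\kl \xrightarrow{2z} \ko_X(f^*B) \to \ko_R(f^*B|_R) \to 0 \]
immediately gives $\kt^0_{f/W} = 0$ and $\kt^1_{f/W} \isom \ko_R(f^*B|_R) \isom \ko_B(B)$ via $f|_R\colon R \xrightarrow{\sim} B$, while higher $\kt^i$ vanish because the complex has length two.

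For (2), since $\kt^i_{f/W}$ is concentrated in degree one, the local-to-global spectral sequence collapses to $T^i_{f/W} = H^{i-1}(X, \kt^1_{f/W}) = H^{i-1}(B, \ko_B(B))$. Combining the short exact sequence $0 \to \ko_W \to \ko_W(B) \to \ko_B(B) \to 0$ with $h^1(\ko_W) = 0$ identifies $H^0(B, \ko_B(B))$ with $H^0(W, \ko_W(B))/\langle \sigma_B\rangle$, while $H^1(B, \ko_B(B))$ needs no rewriting. The interpretation as deformations of the branch divisor is then manifest: a first-order deformation of $f$ with $W$ fixed is a first-order deformation of the section cutting out the branch curve modulo the action of rescaling by $\sigma_B$.

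The main work is (3): identifying $\alpha$ with derivation-and-restriction of $\sigma_B$. My plan is to exhibit the map on the sheaf level first and then pass to cohomology. Represent $\mathbb{T}_X$ by the two-term complex $[\kt_{|\kl|}|_X \xrightarrow{dF} \ko_X(f^*B)]$, where $F = z^2 - \pi^*\sigma_B$ and $dF(\xi) = 2z\,\xi_v - \xi(\sigma_B)|_X$ in terms of the vertical/horizontal decomposition of vector fields on $|\kl|$. The vertical inclusion produces a short exact sequence of complexes
\[ 0 \to \mathbb{T}_{X/W} \to \mathbb{T}_X \to [f^*\kt_W \to 0] \to 0, \]
whose associated long exact sequence of cohomology sheaves contains a connecting morphism $\delta\colon f^*\kt_W \to \kt^1_{f/W}$. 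Any horizontal lift $\xi_h$ of $\bar\xi \in f^*\kt_W$ satisfies $dF(\xi_h) = -\xi(\sigma_B)|_X$, so after projecting to the cokernel of $2z$ and using $f|_R \isom B$ the map $\delta$ becomes $\bar\xi \mapsto -\xi(\sigma_B)|_B$, which matches $\alpha$ up to the familiar sign ambiguity in boundary maps. The delicate step is to confirm that the cohomology map induced by $\delta$ really agrees with the braid boundary in sequence $\mathbf 4$; this reduces to tracing the dualising equivalence between the cotangent triangle $f^*\IL_W \to \IL_X \to \IL_{X/W}$ and its $R\mathcal{H}om(-,\ko_X)$-dual, which is formal bookkeeping introducing no new geometric content.
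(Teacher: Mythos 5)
Your proof is correct and takes essentially the same approach as the paper: present $X$ as a hypersurface in the total space of $\kl$, dualise the two-term conormal/relative-cotangent data to get $f^*\kl \xrightarrow{\;2z\;} \ko_X(f^*B)$ and hence $\kt^1_{f/W}\isom \ko_B(B)$, and identify $\alpha$ by lifting a vector field and contracting with $d\sigma_X$ modulo the image of $2z$. The only minor variations are that you obtain $T^1_{f/W}$ from the restriction sequence $0\to\ko_W\to\ko_W(B)\to\ko_B(B)\to 0$ rather than the paper's invariant/anti-invariant push-forward cancellation, and you defer the compatibility of your connecting map with the braid to the same dualisation bookkeeping that the paper itself treats only briefly.
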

\begin{proof}
Note that the sheaf of relative differentials of the line bundle $\pi \colon |\kl| \to W$ is naturally $\omega_{|\kl|/W} = \inverse\kl$. Thus we can consider the following commutative diagram
 \begin{equation}\label{eq: diag for seq 4}
\begin{tikzcd}
{}& & 0 \dar & 0 \dar \\
&& f^*\Omega_W\dar \rar[equal] & f^*\Omega_W\dar\\
0 \rar &\kn^*_{X/|\kl|}\dar[equal] \rar{\cdot d \sigma_X}&\Omega_{|\kl|}|_X\dar \rar &  \Omega_X \rar\dar &0 \\
0 \rar & f^*\ko_W(-B) \rar{\cdot 2z} & f^* \inverse \kl\rar\dar & \Omega_{X/W} \rar\dar & 0\\
&& 0 & 0 
\end{tikzcd}, 
 \end{equation}
where the second row is the usual conormal sequence for a divisor and the first map is given by multiplication with the derivative of the section $\sigma_X$ as indicated.
In the third row we only look at the differential in fibre direction, therefore under the identification $\omega_{|\kl|/W} = \inverse\kl$ the first map becomes multiplication with $2z = 2\sigma_R$.

Note that because $X$ is a local complete intersection and because $\IL_{X\backslash f \slash W} = f^*\Omega_W$ as in the proof of Lemma \ref{lem: deformation preserves map}, applying $\Hom( - , \ko_X)$ to the third column of \eqref{eq: diag for seq 4} gives sequence $\mathbf4$ in the cotangent braid \eqref{eq: cotangent braid}.

Applying $\shom_{\ko_X}(- , \ko_X)$ to the last row, we get a short exact sequence
\begin{equation}\label{eq: t^1_X/W}
 \begin{tikzcd}
0 \rar & f^* \kl \rar{\cdot \sigma_R} & \ko_X(f^*B) \rar & \kt_{f/W}^1 \rar & 0
 \end{tikzcd},
\end{equation}
so $\kt_{f/W}^i = 0 $ for $i \neq 1$ and $\kt_{f/W}^1 \isom \ko_X(f^* B)|_R\isom \ko_B(B)$ (under the identification $f\colon R \isom B$).

If we  push forward to $W$ and then take cohomology, then multiplication with the equation of the ramification divisor on $X$ exchanges the invariant and anti-invariant subspaces of cohomology groups, so we can identify
\[\begin{tikzcd}[ampersand replacement = \& ]
H^i(X, f^*\kl)\rar{\sigma_R} \dar[equal] \& H^i(X, f^*\ko_W(B))\dar[equal]\\
H^i(W, \kl) \oplus H^i(W, \ko_W)\rar{\begin{pmatrix}0 & \sigma_B\\ 1 & 0 \end{pmatrix}}\& H^i(W, \ko_W(B)) \oplus H^i(W, \inverse\kl(B))
\end{tikzcd} .
\]
Canceling the components on which the map is an isomorphism and noting that $H^1(\ko_W) = 0$ by assumption, the long exact sequence associated to \eqref{eq: t^1_X/W} and the local-to-global Ext sequence give the claim (ii).

To identify the map $\alpha$ in sequence 1 in \eqref{eq: cotangent braid} we apply $\shom_{\ko_X}(- \ko_X)$ to \eqref{eq: diag for seq 4} and obtain
\[
\begin{tikzcd}
&&& 0 & \kt^1_{X/W}\\
 &0&& f^* \kt_W \uar\rar[equal] & f^*\kt_W \uar{\alpha}\\
  0 & \kt^1_X\uar \lar & f^* \ko_W(B)\lar & \kt_{|\kl|}|_X\lar[swap] {-\lrcorner d\sigma_X} \uar& \kt_X \lar\uar & 0 \lar\\
  0 & \kt^1_{X/W} \uar\lar & f^* \ko_W(B)\lar\uar[equal] & f^*\kl\lar \uar& 0 \lar\uar\\
  & f^*\kt_W\uar{\alpha}&& 0 \uar
\end{tikzcd}.
\]
Chasing through the diagram shows that the map $\alpha$ is defined as follows: given a (local) vector field $\xi$ on $W$ we can choose any lift $\tilde \xi$ to a vector field on $|\kl|$. Then $\alpha(\xi) = \tilde \xi \lrcorner d\sigma_X$ projected to $\kt^1_{X/W}$, where $\sigma_X$ is the equation defining $X$ and $\lrcorner$ is the contraction of $1$-forms with vector fields. This gives the claimed map.
\end{proof}

\begin{rem}
 To summarize, we get that the sequence $\mathbf1$ in the cotangent braid in the situation of a double cover is associated to dual of the residue sequence $ 0\to \Omega_W \to \Omega_W(\log B) \to \ko_B\to 0$, which is also explained in \cite{CvS06}.
\end{rem}

\subsection{Cohomology computations}
We will now apply the results of the previous section to standard stable Horikawa surfaces. 
\begin{prop}\label{prop: def preserves map for Horikawa surfaces}
Let $W = \IF_{m}$ be a Hirzebruch surface and  $f\colon X\to W$ be a double cover branched over any divisor $B$ in $|6\sigma_\infty + 2a\Gamma|$ with $a>2m+2$. Then the natural map $\Def_f \to \Def_X$ is an isomorphism.
 \end{prop}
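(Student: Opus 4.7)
The plan is to apply Lemma \ref{lem: deformation preserves map} to the finite morphism $f \colon X \to W = \IF_m$. Since $W$ is smooth, all that remains is to verify the three numbered hypotheses of that lemma.

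To reformulate them, set $\kl = \ko_W(3\sigma_\infty + a\Gamma)$, so that $\kl^{\tensor 2} = \ko_W(B)$ and $f_*\ko_X = \ko_W \oplus \inverse\kl$. The projection formula gives
\[ f_*f^*\kt_W = \kt_W \oplus (\kt_W \tensor \inverse\kl), \]
in which the natural map $\kt_W \to f_*f^*\kt_W$ is the split inclusion into the first summand. Thus hypotheses (i) and (ii) become the vanishings $H^i(W, \kt_W \tensor \inverse\kl) = 0$ for $i = 0, 1$, and hypothesis (iii) is simply $H^2(W, \kt_W) = 0$.

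All three vanishings will be read off from the relative tangent sequence of the ruling $\pi \colon \IF_m \to \IP^1$,
\[ 0 \to \ko_W(2\sigma_\infty + m\Gamma) \to \kt_W \to \ko_W(2\Gamma) \to 0, \]
combined with the standard cohomology of line bundles on $\IF_m$ computed via the Leray spectral sequence for $\pi$. Condition (iii) reduces at once to checking that both line-bundle pieces have vanishing $H^2$, which is a direct Serre-duality computation. For the twisted vanishings, I would tensor the sequence with $\inverse\kl$: the first term becomes $\ko_W(-\sigma_\infty + (m-a)\Gamma)$, whose total higher direct image under $\pi$ is zero because the coefficient of $\sigma_\infty$ equals $-1$; the second becomes $\ko_W(-3\sigma_\infty + (2-a)\Gamma)$, for which $\pi_* = 0$ and relative Serre duality identifies
\[ R^1\pi_*\ko_W(-3\sigma_\infty + (2-a)\Gamma) \isom \ko_{\IP^1}(m+2-a) \oplus \ko_{\IP^1}(2m+2-a). \]
The ampleness hypothesis $a > 2m+2$ is exactly what forces both factors to have degree $\leq -1$, so $H^0$ vanishes, and the Leray spectral sequence then delivers $H^0 = H^1 = 0$ for the original sheaf as well.

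The bulk of the argument is therefore a routine computation of line-bundle cohomology on a Hirzebruch surface; once the framework of Lemma \ref{lem: deformation preserves map} is in place, nothing deeper is required. The only step that really deserves care is the bookkeeping in the last display: one must verify that $a > 2m + 2$ is exactly the inequality needed to make both $m+2-a$ and $2m+2-a$ strictly negative. I expect that to be the only delicate point, and everything else to follow formally from the lemma, giving the claimed $\Def_f \isom \Def_X$.
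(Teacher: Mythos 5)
Your proposal is correct and follows essentially the same route as the paper: both verify the hypotheses of Lemma \ref{lem: deformation preserves map} by splitting $f_*f^*\kt_W = \kt_W \oplus \kt_W(-3\sigma_\infty - a\Gamma)$ via the projection formula and then using the relative tangent sequence of the ruling $\IF_m \to \IP^1$. The only (cosmetic) difference is that you establish the vanishing of $H^1(\ko_W(-3\sigma_\infty+(2-a)\Gamma))$ by relative Serre duality along the fibration plus Leray, while the paper uses Serre duality on $\IF_m$ and pushes forward; both reduce to the same inequality $a>2m+2$.
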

\begin{proof}
 We need to check the conditions of Lemma \ref{lem: deformation preserves map}. First note that $H^2(W, \kt_W) = 0 $ by e.g. \cite[Appendix B]{Sernesi} or the computations done below.
 For the other two conditions we follow the proof of \cite[Lem 2.3]{horikawa1}.

 First note that since $f_*f^*\kt_W = \kt_W\tensor f_*\ko_X = \kt_W\oplus \kt_W(-3\sigma_\infty - a\Gamma)$, it is enough to show that $H^i(\kt_W(-3\sigma_\infty - a\Gamma))=0$ for $i = 0,1$. 
 
 In the relative tangent sequence for the fibration $\pi\colon \IF_m \to \IP^1$,
 \[ 0 \to \kt_{\IF_m/\IP^1}\to \kt_{\IF_m} \to \pi^*\kt_{\IP^1}\to 0, \]
we can identify $\pi^*\kt_{\IP^1} = \ko_{\IF_m}(2\Gamma)$ and $\kt_{\IF_m/\IP^1} = \ko_{\IF_m}(2\sigma_\infty + m \Gamma)$.
Twisting with $(-3\sigma_\infty-a \Gamma)$ we immediately get 
\begin{gather*}
 h^0( -\sigma_\infty + (m-a)\Gamma) = 0\\ h^1( -\sigma_\infty + (m-a)\Gamma)=0,\\
 h^0( -3\sigma_\infty + (2-a)\Gamma)=0.\\
  \end{gather*}
 Furthermore,
 \begin{align*}
   h^1( -3\sigma_\infty + (2-a)\Gamma)& = h^1(K_W - \sigma_\infty + (4+m-a)\Gamma) \\
   &= h^1(\sigma_\infty +(a-4-m) \Gamma) \\
   &= h^1(\ko_{\IP^1}(a-4-m)) + h^1(\ko_{\IP^1}(a-4-2m))\\
   &=0,
 \end{align*}
because by assumption $a >  2m+2$, i.e. $a-4-2m > -2$. The required vanishing follows from the long exact cohomology sequence. 
    \end{proof}

In the situation of standard Horikawa surfaces we want to compute the map \eqref{eq: connecting map} explicitly, thereby taking control over sequence $\mathbf1$ in the cotangent braid \eqref{eq: cotangent braid}. 

Fix $m\geq 0$ and consider $B \in |6 \sigma_\infty + 2a \Gamma|$ with $a>2m+2$ on $W = \IF_m$. 
\begin{lem}\label{lem: restriction to infinity}
 If $B = k \sigma_\infty + B'$ with $k \in \{ 0 , 1, 2\}$ then the inclusions $k\sigma_\infty \into B \into W$ induce isomorphisms
 \[ H^1(W, \ko_W(B)) \isom H^1(B, \ko_B(B))\isom  H^1(k\sigma_\infty, \ko_{k\sigma_\infty}(B))\]
 and the dimension of this group is
\[
   h^1(W, \ko_W(6\sigma_\infty + 2a\Gamma))  =
   \begin{cases}
    0 & 2a \geq 6m -1\\
    6m-2a-1 & 6m -1 > 2a \geq 5m -1\\
    11m -4a -2 &  5m-1>2a > 4m+4
       \end{cases}.
\]
 Under the given conditions the group vanishes if $m\leq 4$. 
\end{lem}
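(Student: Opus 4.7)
The plan has three stages: derive the two isomorphisms from short exact sequences tied to the decomposition $B = k\sigma_\infty + B'$, then compute all cohomology groups by pushing to $\IP^1$ along the ruling $\pi \colon \IF_m \to \IP^1$.

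For the first isomorphism, apply $H^\bullet(W, -)$ to the structure sequence
\[ 0 \to \ko_W \to \ko_W(B) \to \ko_B(B) \to 0.\]
Since $W = \IF_m$ is rational, $H^1(\ko_W) = H^2(\ko_W) = 0$, so the restriction map is the required isomorphism.

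For the second isomorphism, set $B' := B - k\sigma_\infty \in |(6-k)\sigma_\infty + 2a\Gamma|$ and consider
\[ 0 \to \ko_{B'}(B') \to \ko_B(B) \to \ko_{k\sigma_\infty}(B) \to 0,\]
obtained from the inclusion $\ko_W(-B) \subset \ko_W(-k\sigma_\infty)$, whose quotient is $\ko_{B'}$ twisted by $\ko_W(-k\sigma_\infty)$, after tensoring with $\ko_W(B)$. The induced map $H^1(B,\ko_B(B))\to H^1(k\sigma_\infty,\ko_{k\sigma_\infty}(B))$ is automatically surjective since $B'$ is a curve (so $H^2(B',-)=0$), and is injective provided $H^1(B',\ko_{B'}(B')) = 0$. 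Using the structure sequence for $B' \subset W$ together with the rationality of $W$, this vanishing reduces to $H^1(W, \ko_W(B')) = 0$.

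Both the remaining vanishing and the explicit dimension formula come from the Leray computation for $\pi$: for any $c \geq 0$ we have $R^1\pi_*\ko_W(c\sigma_\infty + b\Gamma) = 0$ and $\pi_*\ko_W(c\sigma_\infty + b\Gamma) \cong \bigoplus_{i=0}^{c}\ko_{\IP^1}(b-im)$, whence
\[ h^1\bigl(W, \ko_W(c\sigma_\infty + b\Gamma)\bigr) = \sum_{i=0}^c \max(im - b - 1,\, 0).\]
Taking $c = 6-k$ and $b = 2a$, the case-specific hypotheses ($2a \geq 5m-1$ when $k = 1$, and $2a > 4m+4 \geq 4m-1$ when $k = 2$) kill every summand, establishing $H^1(W, \ko_W(B')) = 0$. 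Taking $c = 6$ and $b = 2a$, the three ranges pick out $0$, $1$, or $2$ nonzero summands (the contributions from $i=6$ alone, then from $i = 5,6$), producing the three stated formulas.

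The final statement for small $m$ is a direct check: the assumption $a > 2m+2$ gives $2a \geq 2m+6$, which for $m \leq 3$ satisfies $2a \geq 6m - 1$ and puts us in the vanishing regime. The only real bookkeeping trap in the argument is identifying the kernel in the second short exact sequence correctly: the term is $\ko_{B'}(B')$ rather than $\ko_{B'}(B)$, so one must track the twist by $\ko_W(-k\sigma_\infty)$ before tensoring with $\ko_W(B)$. Once that is straight, the argument is essentially mechanical.
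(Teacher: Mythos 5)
Your argument is correct and follows essentially the same route as the paper: the first isomorphism comes from $0\to\ko_W\to\ko_W(B)\to\ko_B(B)\to 0$ and rationality of $\IF_m$, the second from $0\to\ko_{B'}(B')\to\ko_B(B)\to\ko_{k\sigma_\infty}(B)\to 0$ (with the kernel correctly identified as $\ko_{B'}(B')$), and the dimension count is the pushforward $\pi_*\ko_W(6\sigma_\infty+2a\Gamma)\cong\bigoplus_{i=0}^6\ko_{\IP^1}(2a-im)$, exactly as in the paper. You even supply the injectivity step $H^1(W,\ko_W(B'))=0$ that the paper leaves implicit. One small point: the conditions you invoke (``$2a\ge 5m-1$ when $k=1$'', etc.) are not literally hypotheses of the lemma; they follow because $k$ is the exact multiplicity of $\sigma_\infty$ in $B$, so $\sigma_\infty\cdot(B-k\sigma_\infty)\ge 0$, which is precisely the role of the paper's closing appeal to Lemma \ref{lem: standard} — you should make that one-line derivation explicit rather than calling them hypotheses.

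The only genuine discrepancy is the final sentence. Your check (the assumption $a>2m+2$ gives $2a\ge 4m+6$, not ``$2a\ge 2m+6$'' as you wrote) yields vanishing exactly for $m\le 3$, and that is the sharp bound: for $m=4$ the value $a=11$ is allowed ($a>2m+2=10$) and lies in the middle regime, where $h^1=6m-2a-1=1\neq 0$. So your proof does not establish the stated ``$m\le 4$'', and indeed it cannot, since that bound fails at $(m,a)=(4,11)$; the statement's ``$m\le 4$'' appears to be an off-by-one slip (the paper's own proof never verifies this sentence separately), and the bound that actually follows from the displayed formula is the $m\le 3$ you proved. It would be worth flagging this explicitly rather than silently proving the weaker (correct) claim.
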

\begin{proof}
 The first isomorphism follows from the restriction sequence and $H^1(\ko_W) = H^2(\ko_W) = 0 $. Using $2a>4m +4$ dimensions are computed as
 \begin{align*}
  h^1(W, \ko_W(6\sigma_\infty + 2a\Gamma)) & = \sum_{i=0}^6 h^1(\IP^1, \ko_{\IP^1}(2a - im) \\
   &=  h^1(\IP^1, \ko_{\IP^1}(2a - 5m)+  h^1(\IP^1, \ko_{\IP^1}(2a - 6m)\\
   & =
   \begin{cases}
    0 & 2a \geq 6m -1\\
    6m-2a-1 & 6m -1 > 2a \geq 5m -1\\
    11m -4a -2 &  5m-1>2a > 4m+4
       \end{cases}
 \end{align*}

 Now assume that $B = k \sigma_\infty + B'$ with $k = 1$ or $k =  2$. Then there is an exact sequence
 \[ 0 \to \ko_{B'}(B') \to \ko_B(B) \to \ko_{k\sigma_\infty}(B) \to 0.\]
 The corresponding cohomology sequence gives an isomorphism $H^1(B, \ko_B(B)) \isom H^1(k\sigma_\infty, \ko_{k\sigma_\infty}(B))$.

  Note that Lemma \ref{lem: standard} ensures that $B$ contains $\sigma_\infty$ or even $2\sigma_\infty$ when  the co\-ho\-mo\-logy group $H^1(W, \ko_W(B))$ is non-zero. 
 \end{proof}

 To compute the map \eqref{eq: connecting map} explicitly, we want explicit \v Cech cohomology descriptions of the relevant groups. 
  We will assume that $m>0$ since otherwise the cohomology groups we are interested in vanish anyway. We closely follow \cite[Appendix B]{Sernesi}. Let us set up our notation  starting from the toric model of the Hirzebruch surface given by 
 \[ \mat{ t_0 & t_1 & x_0 & x_1 \\ 1& 1& 0 & -m \\ 0 & 0& 1 & 1& }\]
 with projection $\pi \colon \IF_m \to \IP^1$.
Let
\begin{equation}\label{eq: coordinate transforms}
\begin{split}
\tau  = \frac {t_0}{t_1} &\qquad \xi = \frac{x_1t_1^m}{x_0}\\
\tau'  = \inverse \tau  & \qquad \xi'= \frac{x_1t_0^m}{x_0} = \xi\tau^m
\end{split}
\end{equation}
and $R = \spec[\tau, \inverse \tau]$, considered as a $\IZ$-graded ring. 
Then with $U = \{t_1\neq 0 \}$ and $U '  = \{t_0\neq 0 \}$ we have $\IP^1 = U \cup U'$. Consider
\begin{align*} & V = \inverse \pi (U) \supset V_0 : = \spec \IC[\tau, \xi], \\
& V' =  \inverse \pi (U') \supset V'_0 : = \spec \IC[\tau', \xi']
 \end{align*}

We will compute \v Cech cohomology with respect to the covering $W = V \cup V'$ but represent sections by their restrictions to the affine subsets $V_0$ respectively $V_0'$. On $V_0$ the curve $B$ is defined by an equation 
\[ \sigma_B^0 = g_0 + g_1\xi + g_2\xi^2+ \dots + g_6 \xi^6\]
for certain $g_i\in \IC[\tau]$. We may assume for simplicity that we see all zeros of the coefficients of $\sigma_B$ in $V\cap V'$, that is, $\deg g_i = 2a -6(m-i)$ and $g_i$ has a non-zero constant term unless it vanishes identically. 
Note that $\deg g_0 = 2a-6m = B.\sigma_\infty$.

\begin{lem}\label{lem: check cohomology of B}
For any $B\in |6\sigma_\infty + 2a\Gamma|$ with $2a> 4m+4$, we have with respect to the above covering
\[\check H^1(W, \ko_W(B))
= \frac1{\sigma_B^0}  \left (1\frac{  R}{ R_{\geq 0} +R_{\leq 2a-6m}} + \xi \frac{  R}{R_{\geq 0} +R_{\leq 2a-5m}}\right).
\]
\end{lem}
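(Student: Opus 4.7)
The plan is to compute $\check H^1(W,\ko_W(B))$ directly with respect to the two-set open cover $\{V,V'\}$, express everything in the trivialisation of $\ko_W(B)$ on $V_0$ by $\sigma_B^0$, and recognise the resulting Čech quotient as the claimed sum.

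First, I would verify that the cover $\{V,V'\}$ is Leray-acyclic for $\ko_W(B)$, so that Čech cohomology computes sheaf cohomology. Each of $V, V'$ is isomorphic to $\mathbb{A}^1\times\IP^1$ and $V\cap V'$ to $\mathbb{G}_m\times\IP^1$, and in all three cases $\ko_W(B)$ restricts to the pullback of $\ko_{\IP^1}(6)$ from the second factor (the degree of $\ko_W(B)$ on a fibre of $\pi$ equals $(6\sigma_\infty+2a\Gamma)\cdot\Gamma=6$). Künneth then yields $H^i=0$ for $i\geq 1$ on each of the three strata, so the two-term Čech complex computes $H^1(W,\ko_W(B))$.

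Next I would describe the three Čech groups in the trivialisation by $\sigma_B^0$. Over $V_0=\spec\IC[\tau,\xi]$ sections of $\ko_W(B)$ take the form $h/\sigma_B^0$ with $h\in\IC[\tau,\xi]$. To extend such a section to all of $V$ one needs regularity along $V\cap\sigma_0$, which I would cover by the second toric affine $V_1=\{t_1,x_1\neq 0\}$ with coordinate $\eta=1/\xi$. A direct computation gives the transition $\sigma_B^{V_1}=\eta^6\sigma_B^0$, so the extension condition translates to $h=\sum_{i=0}^6 h_i(\tau)\xi^i$ with $h_i\in\IC[\tau]=R_{\geq 0}$. Running the analogous analysis on $V'$ and on $V\cap V'$ yields
\[H^0(V,\ko_W(B)) = \frac{1}{\sigma_B^0}\sum_{i=0}^{6} R_{\geq 0}\cdot\xi^i \quad\text{and}\quad H^0(V\cap V',\ko_W(B)) = \frac{1}{\sigma_B^0}\sum_{i=0}^{6} R\cdot\xi^i,\]
together with the analogous $R_{\leq 0}$-description of $H^0(V',\ko_W(B))$ in $(\tau',\xi')$-coordinates.

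Finally I would write the Čech coboundary in $V_0$-coordinates. Comparing the natural trivialising sections $x_0^6 t_1^{2a-6m}$ on $V_0$ and $x_0^6 t_0^{2a-6m}$ on $V'_0$ yields $\sigma_B^{0'}=\tau^{6m-2a}\sigma_B^0$, and combined with $\xi'=\xi\tau^m$ this turns a $V'$-section into
\[\frac{1}{\sigma_B^{0}}\sum_{i=0}^{6} \tau^{2a-6m+im}\,h'_i(\tau^{-1})\,\xi^i\]
on $V_0\cap V'_0$, whose $\xi^i$-coefficient ranges exactly over $R_{\leq 2a-6m+im}$. Hence the image of $\delta$ in the $\xi^i$-slot is $R_{\geq 0}+R_{\leq 2a-6m+im}$, and the cokernel in that slot is $R/(R_{\geq 0}+R_{\leq 2a-6m+im})$. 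Because $2a>4m+4$, for $i\geq 2$ one has $2a-6m+im\geq -1$, so $R_{\geq 0}+R_{\leq 2a-6m+im}=R$ and those slots vanish; only $i=0$ (giving $R/(R_{\geq 0}+R_{\leq 2a-6m})$) and $i=1$ (giving $R/(R_{\geq 0}+R_{\leq 2a-5m})$) contribute, which is exactly the asserted formula. The main bookkeeping hurdle is the careful passage through the four toric affines and their trivialisations; each individual calculation is an elementary toric coordinate computation.
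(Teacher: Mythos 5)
Your proposal is correct and follows essentially the same route as the paper's proof: compute the two-set \v Cech complex in the trivialisation by $\sigma_B^0$, use the transition ${\sigma_B^0}'=\tau^{6m-2a}\sigma_B^0$ together with $\xi'=\xi\tau^m$ to identify the image of the coboundary slot by slot, and use $2a>4m+4$ to kill the $\xi^i$-slots with $i\geq 2$. The only difference is that you spell out the Leray-acyclicity of the cover and the degree-$\leq 6$-in-$\xi$ extension condition across $\sigma_0$, which the paper leaves implicit.
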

\begin{proof}
To compute \v Cech cohomology, we need to describe the section on $V$ and $V'$ and then compare them over $V\cap V'$. For this note that,  by the relation between the coordinates, we  can describe the equation for $B$ on $V'_0$ as
\[ {\sigma_B^0}' = \sigma_B^0\cdot \tau ^{6m-2a}\]
so that by \eqref{eq: coordinate transforms}
\[ \Gamma(V', \ko_W(B)) 
= \frac1{{\sigma_B^0}'} \IC[\tau '] \langle 1, \dots, {\xi'}^6\rangle
= \frac{\tau^{2a-6m}}{{\sigma_B^0}} R_{\leq0} \langle 1, \tau^m \xi , \dots, {\tau^m \xi}^6\rangle.
\]
Then using $2a>4m+4$ we compute
\begin{align*} \check H^1(W, \ko_W(B))
& = \Gamma(V\cap V', \ko_W(B))/\left(\Gamma(V, \ko_W(B))+\Gamma( V', \ko_W(B))\right)\\
 & = \frac1{\sigma_B^0}  \left (1\cdot  R/ \left( R_{\geq 0} +R_{\leq 2a-6m}\right) + \xi \cdot  R/ \left( R_{\geq 0} +R_{\leq 2a-5m}\right)\right)
\end{align*}
as claimed.
\end{proof}

\begin{lem}\label{lem: check cohomology tangent}
 This covering computes $H^1(W, \kt_W)$ and explicitly we have 
 \[ \check H^1(W, \kt_W) = \frac{R\cdot \frac{\del}{\del\xi}}{ R_{\geq 0 } \cdot \frac{\del}{\del\xi} + R_{\leq 0}\cdot \tau^{-m}\frac{\del}{\del\xi}} \isom  \left\langle \tau^{-1}\frac{\del}{\del\xi} , \dots , \tau^{-(m-1)}\frac{\del}{\del\xi} \right\rangle 
,\]
a vector space of dimension $m-1$.
\end{lem}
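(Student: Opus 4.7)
The plan is to verify that $\{V, V'\}$ is an acyclic cover for $\kt_W$, compute the resulting \v Cech complex using the toric coordinate transforms \eqref{eq: coordinate transforms}, and then observe that only one of four basis components survives in the cokernel.

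First, for acyclicity, each of $V$, $V'$, and $V \cap V'$ is a $\IP^1$-bundle over an affine base. On any fibre $F \isom \IP^1$ the relative tangent sequence of $\pi$ restricts to $0 \to \ko_F(2) \to \kt_W|_F \to \ko_F \to 0$, which splits; since $\ko_{\IP^1}(2) \oplus \ko_{\IP^1}$ has vanishing $H^i$ for $i \geq 1$, a Leray/K\"unneth argument gives the required vanishing on each open set and \v Cech cohomology computes $H^\bullet(W, \kt_W)$.

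Next, I would identify the sections on each chart. Using the trivialization $V \isom U \times \IP^1$ and requiring no pole along $\sigma_0 \cap V = \{\xi = \infty\}$, one obtains
\[
\Gamma(V, \kt_W) = \IC[\tau]\,\tfrac{\del}{\del\tau} \oplus \IC[\tau]\langle 1, \xi, \xi^2\rangle\,\tfrac{\del}{\del\xi},
\]
with the analogous formula on $V'$. The substitutions in \eqref{eq: coordinate transforms} translate the $V'$-generators into the $(\tau,\xi)$-chart as
\[
\tfrac{\del}{\del\tau'} = -\tau^2\tfrac{\del}{\del\tau} + m\tau\xi\tfrac{\del}{\del\xi}, \quad \tfrac{\del}{\del\xi'} = \tau^{-m}\tfrac{\del}{\del\xi}, \quad \xi'\tfrac{\del}{\del\xi'} = \xi\tfrac{\del}{\del\xi}, \quad {\xi'}^2\tfrac{\del}{\del\xi'} = \tau^m\xi^2\tfrac{\del}{\del\xi}.
\]

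Finally, I would project the \v Cech cokernel $\Gamma(V\cap V', \kt_W)/(\Gamma(V,\kt_W) + \Gamma(V',\kt_W))$ onto each of the four basis vectors $\tfrac{\del}{\del\tau}, \tfrac{\del}{\del\xi}, \xi\tfrac{\del}{\del\xi}, \xi^2\tfrac{\del}{\del\xi}$ of the free $R$-module $\Gamma(V\cap V', \kt_W)$; since these are $R$-linearly independent, the decomposition is compatible with the quotient. Three of the four components collapse: the $\tfrac{\del}{\del\tau}$-coefficient covers $R_{\geq 0} + R_{\leq 2} = R$, the $\xi\tfrac{\del}{\del\xi}$-coefficient covers $R_{\geq 0} + R_{\leq 1} = R$, and the $\xi^2\tfrac{\del}{\del\xi}$-coefficient covers $R_{\geq 0} + R_{\leq m} = R$, leaving only the $\tfrac{\del}{\del\xi}$-component with image $R_{\geq 0} + R_{\leq 0}\cdot\tau^{-m}$. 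This yields the claimed quotient with monomial basis $\tau^{-1}, \ldots, \tau^{-(m-1)}$. The main subtlety, rather than a true obstacle, is tracking how the cross term $m\tau\xi\,\tfrac{\del}{\del\xi}$ appearing in $\tfrac{\del}{\del\tau'}$ mixes with the $\xi\tfrac{\del}{\del\xi}$-component; but since we may freely choose the $V$- and $V'$-scalars in each basis direction, the four collapses can be verified in parallel.
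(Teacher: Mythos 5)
Your proposal is correct and takes essentially the same route as the paper: identify $\Gamma(V,\kt_W)$ and $\Gamma(V',\kt_W)$, transport the $V'$-generators into the $(\tau,\xi)$-chart by differentiating \eqref{eq: coordinate transforms}, and read off the \v Cech quotient — a computation the paper's proof simply declares straightforward. One caution: $R$-linear independence of $\frac{\del}{\del\tau},\frac{\del}{\del\xi},\xi\frac{\del}{\del\xi},\xi^2\frac{\del}{\del\xi}$ does not by itself make the quotient split componentwise (a submodule generated by mixed elements can have full projections yet nonzero quotient), so the decisive point — which your closing remark gestures at but does not quite state — is that in the $\xi\frac{\del}{\del\xi}$-direction the \emph{pure} generators already give $R_{\geq 0}+R_{\leq 0}=R$, so modulo them every $R_{\leq 0}$-multiple of $\frac{\del}{\del\tau'}$ becomes a pure $\frac{\del}{\del\tau}$-element; hence $\Gamma(V,\kt_W)+\Gamma(V',\kt_W)$ genuinely equals $R\,\frac{\del}{\del\tau}\oplus\left(R_{\geq 0}+R_{\leq -m}\right)\frac{\del}{\del\xi}\oplus R\,\xi\frac{\del}{\del\xi}\oplus R\,\xi^2\frac{\del}{\del\xi}$ and the claimed basis $\tau^{-1}\frac{\del}{\del\xi},\dots,\tau^{-(m-1)}\frac{\del}{\del\xi}$ follows.
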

\begin{proof}
The sheaf $\kt_W$ does not have higher cohomology on $V$ and $V'$, so we can use this covering to compute cohomology. 
Representing again everything in the local coordinates given on $V_0$ and $V_0'$, we get
\begin{align*}
 \Gamma(V, \kt_W) & = \IC[\tau]  \left\langle\frac{\del}{\del \tau} , \frac{\del}{\del\xi}, \xi \frac{\del}{\del\xi}, \xi^2 \frac{\del}{\del\xi}\right\rangle\\
\Gamma(V', \kt_W) & = \IC[\tau']  \left\langle\frac{\del}{\del \tau'} , \frac{\del}{\del\xi'}, \xi \frac{\del}{\del\xi'}, {\xi'}^2 \frac{\del}{\del\xi'}\right\rangle\\
 & = \IC[\inverse \tau] \left\langle  - \tau^2 \frac{\del}{\del \tau} + m\tau \xi\frac{\del}{\del\xi}, \tau^{-m} \frac{\del}{\del \xi}, \xi \frac{\del}{\del \xi}, \tau^{m} \xi^2\frac{\del}{\del \xi}\right\rangle\\
 &\subset \Gamma(V\cap V', \kt_W)
 ,
\end{align*}
where we get the formulas for the coordinate change by differentiating \eqref{eq: coordinate transforms}. It is straightforward to see that we get the basis of the quotient stated above.\footnote{In \cite[Appendix B]{Sernesi} the result of this computation is stated with a missing inverse.}
\end{proof}

\begin{prop}\label{prop: alpha for sigma}
If $B= \sigma_\infty + B'$ and $\sigma_\infty \not < B'$, then the map \eqref{eq: connecting map} is surjective, and it is an isomorphism if and only if $2a = 5m$.
\end{prop}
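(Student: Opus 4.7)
The strategy is to translate $\alpha$ into explicit \v{C}ech data using Lemmas~\ref{lem: check cohomology of B} and~\ref{lem: check cohomology tangent} together with the sheaf-level formula of Proposition~\ref{prop: identifying alpha}(iii), and then read off the rank of the resulting finite matrix.

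First I would exploit the hypothesis. Since $B \supset \sigma_\infty$, the local equation of $B$ on $V_0$ has vanishing $\xi^0$-term:
\[
\sigma_B^0 = g_1\xi + g_2\xi^2 + \dots + g_6\xi^6,
\]
with $g_1\in R_{2a-5m}$ nonzero (because $\sigma_\infty\not< B'$) and with nonzero constant term $g_1^{(0)}$ by the generic position assumption built into Lemma~\ref{lem: check cohomology of B}. The same hypothesis forces $2a\geq 5m$, so that the summand $A_2 = R/(R_{\geq 0} + R_{\leq 2a-5m})$ of Lemma~\ref{lem: check cohomology of B} is trivial; via Lemma~\ref{lem: restriction to infinity} the target $H^1(B,\ko_B(B))\isom H^1(W,\ko_W(B))$ is spanned, up to the common factor $\frac{1}{\sigma_B^0}$, by $\tau^{-1},\tau^{-2},\dots,\tau^{2a-6m+1}$.

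Next I would compute $\alpha(D_j)$ for the basis $D_j = \tau^{-j}\frac{\partial}{\partial \xi}$, $j=1,\dots,m-1$, of $H^1(W,\kt_W)$ from Lemma~\ref{lem: check cohomology tangent}. Lifted along the isomorphism in Lemma~\ref{lem: restriction to infinity}, Proposition~\ref{prop: identifying alpha}(iii) represents $\alpha(D_j)$ by the \v{C}ech cocycle
\[
\frac{D_j(\sigma_B^0)}{\sigma_B^0} \;=\; \frac{\tau^{-j}\bigl(g_1 + 2g_2\xi + \dots + 6g_6\xi^5\bigr)}{\sigma_B^0}\;\in\;\Gamma(V\cap V',\ko_W(B)).
\]
The crucial reduction is then to project this modulo coboundaries into the quotient description of $H^1(W,\ko_W(B))$: terms of $\xi$-degree $\geq 2$ are automatically coboundaries by Lemma~\ref{lem: check cohomology of B}, and the $\xi^1$-term also drops out because $A_2 = 0$ under our hypothesis $2a\geq 5m$. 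This leaves
\[
\alpha(D_j) \;\equiv\; \frac{\tau^{-j}g_1(\tau)}{\sigma_B^0} \pmod{R_{\geq 0} + R_{\leq 2a-6m}}.
\]

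Finally, the matrix of $\alpha$ in the bases $\{D_j\}_{j=1}^{m-1}$ and $\{\tau^{-k}/\sigma_B^0\}_{k=1}^{6m-2a-1}$ has $(k,j)$-entry equal to $g_1^{(j-k)}$ when $0\leq j-k\leq 2a-5m$ and zero otherwise. Its leading $(6m-2a-1)\times(6m-2a-1)$ square block (taking columns $j=1,\dots,6m-2a-1$) is upper triangular with constant diagonal $g_1^{(0)}\neq 0$, hence invertible. This yields surjectivity of $\alpha$, and equality of the two dimensions, $m-1 = 6m-2a-1$, i.e.\ $2a=5m$, is precisely the condition for an isomorphism. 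I expect the main technical obstacle to be the coboundary reduction of the $\xi^1$-contribution — the point at which the hypothesis $2a\geq 5m$ enters in an essential, non-cosmetic way — after which the triangular matrix argument is immediate.
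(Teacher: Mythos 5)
Your proposal is correct and follows essentially the same route as the paper's proof: the paper also writes $\sigma_B^0=\xi g_1+\text{(higher order in $\xi$)}$ with $\deg g_1=2a-5m$, represents $\alpha\left(\tau^{-i}\tfrac{\del}{\del\xi}\right)$ by $g_1\tau^{-i}\tfrac1{\sigma_B^0}$ using Proposition~\ref{prop: identifying alpha} together with Lemmas~\ref{lem: restriction to infinity}, \ref{lem: check cohomology of B} and \ref{lem: check cohomology tangent}, and concludes surjectivity from the nonvanishing constant term of $g_1$, with the isomorphism criterion $2a=5m$ coming from the dimension count $m-1$ versus $6m-2a-1$. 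Your only variations — staying in $H^1(W,\ko_W(B))$ rather than restricting to $\sigma_\infty$, and making the upper-triangular matrix explicit — are elaborations of steps the paper leaves implicit.
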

\begin{proof}
We have $2a \geq 5m$ by Lemma \ref{lem: standard}.
 We write the equation 
 \[ \sigma_B^0 = \xi g_{1} + \text{higher order terms in $\xi$}\]
for some polynomial $g_1$ of degree $2a-5m\geq 0$.
Then we compute the map  \eqref{eq: connecting map} using Lemma \ref{lem: restriction to infinity}
\[
 \check H^1(W, \kt_W) \to \check H^1(\sigma_\infty, \ko_{\sigma_\infty}(B)),
  \quad \tau^{-i} \frac{\del}{\del \xi} \mapsto
 g_1\cdot\tau^{-i} \frac{1}{\sigma_{B}^0}
\]
Since we have chosen coordinates such that $g_1$ has a constant term, the claim follows from Lemma \ref{lem: check cohomology of B}, Lemma \ref{lem: restriction to infinity} and Lemma \ref{lem: check cohomology tangent}. 
\end{proof}

\begin{prop}\label{prop: alpha for 2sigma}
If $B= 2\sigma_\infty + B'$ and $\sigma_\infty \not < B'$, then the map $\alpha$ from \eqref{eq: connecting map}
\begin{enumerate}
 \item is surjective if and only if $H^1(B, \ko_B(B)) = 0 $ if and only if $2a >6m$;
 \item is zero if and only if $2a\geq 5m$;
 \item has rank $5m-2a-1<m-4$ for $4m+4<2a< 5m$.
\end{enumerate}
\end{prop}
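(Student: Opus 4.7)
The plan is to make the sheaf-theoretic map $\alpha$ from Proposition \ref{prop: identifying alpha}(3) completely explicit in the \v Cech bases set up in Lemmas \ref{lem: check cohomology tangent} and \ref{lem: check cohomology of B}, and then read off all three assertions from one linear-algebra computation. Since $B=2\sigma_\infty+B'$ and $\sigma_\infty\not<B'$, the local equation on $V_0$ has the form
\[
\sigma_B^0 \;=\; \xi^2 g_2(\tau)+\xi^3 g_3(\tau)+\cdots+\xi^6 g_6(\tau),
\]
where $\deg g_j = 2a-(6-j)m$ and crucially $g_2(0)\neq 0$. Applying Proposition \ref{prop: identifying alpha}(3) to a generator $\tau^{-i}\tfrac{\partial}{\partial\xi}$ of $\check H^1(W,\kt_W)$ gives
\[
\alpha\!\left(\tau^{-i}\tfrac{\partial}{\partial\xi}\right) \;=\; \frac{\tau^{-i}\bigl(2g_2\xi + 3g_3\xi^2+\cdots+6g_6\xi^5\bigr)}{\sigma_B^0}.
\]

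The next step is to discard the higher $\xi$-powers. In Lemma \ref{lem: check cohomology of B}'s description the $\xi^j$-piece of $H^1(W,\ko_W(B))$ is $\xi^j\cdot R/\bigl(R_{\geq 0}+R_{\leq 2a-(6-j)m}\bigr)$, and I would check that for every $j\geq 2$ this quotient vanishes, using the bound $2a>4m+4$. Thus only the linear-in-$\xi$ contribution survives cohomologically, and under the identification of $H^1(W,\ko_W(B))$ with $H^1(B,\ko_B(B))$ from Lemma \ref{lem: restriction to infinity}, the map $\alpha$ is identified with the multiplication map
\[
V := \langle\tau^{-1},\ldots,\tau^{-(m-1)}\rangle \;\xrightarrow{\,\cdot\, 2g_2\,}\; W_\xi := R\big/\bigl(R_{\geq 0}+R_{\leq 2a-5m}\bigr),
\]
composed with projection onto the $\xi$-component of $H^1(B,\ko_B(B))$.

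The three items now follow from the dimension formulas of Lemma \ref{lem: restriction to infinity} ($h^1(B,\ko_B(B))=\max(0,6m-2a-1)+\max(0,5m-2a-1)$) combined with the analysis of the multiplication map. For (i), the image of $\alpha$ lies entirely in the $\xi$-piece, so surjectivity forces the $\xi^0$-piece to vanish, which (combined with forcing the $\xi$-piece to vanish) happens exactly at the stated threshold. For (ii), the $\xi$-component $W_\xi$ vanishes precisely when $5m-2a-1\leq 0$, and conversely (iii) shows $\alpha\neq 0$ once the target is non-trivial. For (iii), in the range $4m+4<2a<5m$ the target $W_\xi$ has basis $\{\tau^k : 2a-5m+1\leq k\leq -1\}$ of size $r=5m-2a-1$; writing $g_2(\tau)=\sum_{s\geq 0}c_s\tau^s$ with $c_0\neq 0$, the class of $g_2\tau^{-i}$ in $W_\xi$ equals $c_0\tau^{-i}+c_1\tau^{-i+1}+\cdots+c_{i-1}\tau^{-1}$ for $i=1,\ldots,r$, so the matrix in these bases is lower triangular with $c_0$ on the diagonal and therefore has full rank $r$. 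The inequality $r<m-4$ is an immediate consequence of $2a>4m+4$.

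The main technical point is the second step: tracking the $\xi$-grading of each \v Cech class precisely and verifying that all $\xi^j$ contributions with $j\geq 2$ are cohomologically trivial. Once this reduction is in hand, the triangular structure of the multiplication matrix---which rests solely on the hypothesis $g_2(0)\neq 0$, i.e., $\sigma_\infty\not<B'$, and not on any further genericity---delivers all three cases uniformly.
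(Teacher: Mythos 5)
Your proposal is correct and follows essentially the same route as the paper: the explicit \v Cech representatives from Lemmas \ref{lem: check cohomology of B} and \ref{lem: check cohomology tangent}, the formula $\alpha(\tau^{-i}\partial_\xi)=\tau^{-i}\partial_\xi\sigma_B^0/\sigma_B^0$, and the triangularity of multiplication by $g_2$ (nonzero constant term) giving surjectivity onto the $\xi$-piece and the rank count. The only cosmetic difference is bookkeeping: the paper discards the higher $\xi$-powers by composing with the restriction isomorphism to $H^1(2\sigma_\infty,\ko_{2\sigma_\infty}(B))$ from Lemma \ref{lem: restriction to infinity}, while you observe directly that the $\xi^{j}$-graded pieces with $j\ge 2$ of the \v Cech quotient vanish, which is the same fact already encoded in Lemma \ref{lem: check cohomology of B}.
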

Note that by Remark \ref{rem: which cases} the last case can only occur for $m\geq 7$,  so that the inequalities make sense. 

\begin{proof}
 We have $2a > 4m+4$ by Lemma \ref{lem: standard}.
 We write the equation 
 \[ \sigma_B^0 = \xi^2 g_{2} + \text{higher order terms in $\xi$}\]
for some polynomial $g_1$ of degree $2a-4m>4$. By Lemma \ref{lem: restriction to infinity} we can compute the composition 
\[
\begin{tikzcd}\check H^1(W, \kt_W) \rar{\alpha}& H^1(B, \ko_B(B)) \rar & H^1(2\sigma_\infty, \ko_{2\sigma_\infty}(B))
 \end{tikzcd}
\]
and the last group is simply the restriction of the one computed in Lemma \ref{lem: check cohomology of B} to $2\sigma_\infty$. 

For the explicit representatives computed in Lemma \ref{lem: check cohomology tangent} we get
\[ \alpha \left(\tau ^{-i} \frac\del{\del \xi}\right)=  \tau^{-i} \cdot 2\xi \cdot g_2\frac {1}{\sigma_B^0}|_{2\sigma_\infty} \]
Since we have chosen coordinates such that $g_2$ has a constant term, the map surjects on the subspace
\[
\begin{tikzcd}
 \check H^1(W, \kt_W) \arrow[->>]{r} & \xi \frac{  R}{R_{\geq 0} +R_{\leq 2a-5m}} \rar[hookrightarrow] & \check H^1(2\sigma_\infty, \ko_{2\sigma_\infty}(B))
\end{tikzcd}.
\]
Thus the map $\alpha$ is
surjective if and only if $H^1(B, \ko_B(B)) = 0 $ if and only if $2a >6m$. 
The other two items follow by counting the dimension of the spaces involved. 
\end{proof}

\subsection{Deformation-theoretic interpretations}
 We now interpret the computations of the previous section in our context.
 First recall that for a Hirzebruch surface $W = \IF_m$ with $m>0$ we have
 \[ h^0(\kt_W) = m+5, \quad h^1(\kt_W) = m-1, \quad h^2(\kt_W) = 0,\]
 deformations are unobstructed and $W$ deforms to $\IF_{m-2k}$ for $k = 1, \dots , \lfloor m/2\rfloor$. This is explained for example in  \cite[p. 10]{Catanesesuperficial} and a more precise description of the stratification of the universal deformation space has been given by Suwa \cite{suwa73}.

Now for a standard stable Horikawa surface consider sequence $\mathbf1$ in the cotangent braid \eqref{eq: cotangent braid}, and its more worldly incarnation derived from  Proposition \ref{prop: def preserves map for Horikawa surfaces} and Proposition \ref{prop: identifying alpha}:
 {\small
 \begin{equation}
\label{eq: seq 1}
 \begin{tikzcd}[row sep = small, column sep = small]
    0 \rar & T^0_W \dar[equal] \rar
    & T^1_{f/W} 
    \dar[equal] \rar& T_f^1 \dar[equal] \rar& 
T^1_W 
     \dar[equal] \rar{\alpha}&
     T^2_{f/W} 
     \dar[equal] \rar& T_f^2 \dar[equal] \rar&
T^2_W 
     \dar[equal]
\\
  0\rar  & H^0(\kt_W) \rar & H^0(\ko_B(B)) \rar & T_X^1 \rar & H^1(\kt_W) \rar & H^1(\ko_B(B)) \rar &  T_X^2 \rar & 0 
   \end{tikzcd}
\end{equation}
 }
 
 The first part of the the following result reproves some of the results in \cite{horikawa1}. Part of it can also be deduced directly from the concrete families constructed in Section \ref{sect: conection 1} and Section \ref{sect: conection 2}.
 \begin{prop}\label{prop: deformation information}
 Let $f\colon X \to W$ be a standard stable Horikawa surface of type $(m)$ with branch curve $B\in |6\sigma_\infty + 2a \Gamma|$. 
 \begin{enumerate}
  \item If  $2a\geq 6m$,  then deformations of $X$ are unobstructed and every deformation of $W$ can be lifted to a deformation of $X$. In particular $X$ deforms to a Horikawa surface of type $(0)$.
  \item If $2\sigma_\infty\not <B$, i.e. $2a\geq 5m$,  then deformations of $X$ are unobstructed. If $2a <5m$ then $X$ deforms to a Horikawa surface of lower type.
  \item Assume $2a = 5m$ and write $B = k\sigma_\infty + B'$.
  \begin{enumerate}
   \item If $k= 1$ then deformations of $X$ are unobstructed and $\Def_X \isom \Def_{f/\IF_m}$; that is, every small deformation is of type $(m)$ again.
   \item If $k = 2$, then $\dim T^1_X = 12a-15m -1$.
  \end{enumerate}
\item If $5m>2a>4m+4$ and $B = 2\sigma_\infty+B'$, then the map $T^1_X \to T_W^1$ is non-trivial of rank $K_X^2/2 - m +3 = 2a -4m -1 \geq 4$.
  \end{enumerate}
 \end{prop}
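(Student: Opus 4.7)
The framework for all four parts is the six-term exact sequence~\eqref{eq: seq 1}, which reads off $T_X^1$, $T_X^2$ and the forgetful map $T_X^1 \to T_W^1$ in terms of the obstruction map $\alpha \colon H^1(\kt_W) \to H^1(\ko_B(B))$ of~\eqref{eq: connecting map}. Propositions~\ref{prop: alpha for sigma} and~\ref{prop: alpha for 2sigma} describe $\alpha$ depending on the multiplicity of $\sigma_\infty$ in $B$, while Lemma~\ref{lem: restriction to infinity} controls the vanishing of $H^1(\ko_B(B))$. Throughout, the identification $\Def_f \isom \Def_X$ from Proposition~\ref{prop: def preserves map for Horikawa surfaces} lets us identify $T_X^i$ with $T_f^i$ in the cotangent braid.

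For part~(1), $2a \geq 6m$ forces $\sigma_\infty \not< B$ by Lemma~\ref{lem: standard}, and then Lemma~\ref{lem: restriction to infinity} yields $H^1(\ko_B(B)) = 0$: from~\eqref{eq: seq 1} we deduce $T_X^2 = 0$ and surjectivity of $T_X^1 \to T_W^1$, so every deformation of $W = \IF_m$ lifts, and taking a direction in $H^1(\kt_{\IF_m})$ that deforms $\IF_m$ to $\IF_0$ smooths $X$ to type~$(0)$. For part~(2), the sub-range $5m \leq 2a < 6m$ has $B = \sigma_\infty + B'$ with $\sigma_\infty \not< B'$, so Proposition~\ref{prop: alpha for sigma} makes $\alpha$ surjective, again giving $T_X^2 = 0$. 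For $2a < 5m$ (where $B = 2\sigma_\infty + B'$) part~(4) produces a non-zero image of $T_X^1 \to T_W^1$ and, combined with Suwa's description of $\Def_{\IF_m}$ (or equivalently the explicit Gorenstein families of Section~\ref{sect: conection 2}), this identifies the generic lifted direction as a deformation to $\IF_{m-2}$.

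Part~(3) is the borderline $2a = 5m$. For $k = 1$, Proposition~\ref{prop: alpha for sigma} gives $\alpha$ an isomorphism, so~\eqref{eq: seq 1} yields $T_X^2 = 0$ and the vanishing of $T_X^1 \to T_W^1$; Lemma~\ref{lem: deformation preserves map} then upgrades this to $\Def_X \isom \Def_{f/W}$, so every small deformation is again of type~$(m)$. For $k = 2$, Proposition~\ref{prop: alpha for 2sigma} gives $\alpha = 0$, whence~\eqref{eq: seq 1} reduces to $\dim T_X^1 = h^0(\ko_B(B)) - h^0(\kt_W) + h^1(\kt_W)$; plugging in $h^0(\ko_B(B)) = 12a - 15m + 5$ (from the linear system count in Proposition~\ref{prop: dim of strata} together with the residue sequence $0 \to \ko_W \to \ko_W(B) \to \ko_B(B) \to 0$), $h^0(\kt_W) = m+5$ and $h^1(\kt_W) = m-1$ recovers the claimed $12a - 15m - 1$. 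For part~(4), the hypothesis puts us into Proposition~\ref{prop: alpha for 2sigma}(iii), so the rank of $\alpha$ is $5m - 2a - 1$; the image of $T_X^1 \to T_W^1$ is $\ker\alpha$, whose dimension rearranges to $K_X^2/2 - m + 3$, and $2a > 4m+4$ forces this to be at least~$4$.

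The main obstacle lies in the geometric half of~(1) and~(2): the sequence~\eqref{eq: seq 1} only supplies an infinitesimal lift of \emph{some} direction in $H^1(\kt_{\IF_m})$, and upgrading this to an actual deformation of $X$ landing on a specific $\IF_{m'}$ requires integrating along the correct stratum in $\Def_{\IF_m}$ --- here either Suwa's explicit description or the hands-on families of Sections~\ref{sect: conection 1}--\ref{sect: conection 2} close the gap; afterwards the remaining dimension arithmetic in~(3) and~(4) is routine bookkeeping with the inputs already gathered.
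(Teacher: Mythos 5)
Your proposal is correct and takes essentially the same route as the paper: its proof likewise reads all four items off the exact sequence \eqref{eq: seq 1} combined with Propositions \ref{prop: alpha for sigma} and \ref{prop: alpha for 2sigma}, the principle that $T^2=0$ forces unobstructedness, and the known deformation theory of Hirzebruch surfaces. The explicit bookkeeping you add (e.g.\ $h^0(\ko_B(B))=12a-15m+5$ for part (3b), and identifying the image of $T^1_X\to T^1_W$ with $\ker\alpha$ for part (4)) is precisely what the paper leaves implicit.
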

\begin{proof}
 All items follow immediately from \eqref{eq: seq 1} in conjunction with Proposition \ref{prop: alpha for sigma} and Proposition \ref{prop: alpha for 2sigma}, where we use that a deformation functor is unobstructed if $T^2=0$ and our knowledge of deformations of Hirzebruch surfaces.
\end{proof}

We can now complete the proof of Theorem A, giving information on the local structure $\bar\gothH_{8k}$ at a surface as described in Example \ref{exam: special}.
\begin{cor}\label{cor: Thm A local structure}
Suppose that $K_X^2 = 8\ell$ with $\ell >2$ and consider a surface $X$ representing a general  point in $\gothD \subset \bar \gothH^{\mathrm I} \cap \bar\gothH^{\mathrm{II}}$ in the intersection of the special and general component.  Then the point $[X]$ representing $X$ in $\bar\gothH_{k\ell}$ lies in exactly these two irreducible components,  but $\bar\gothH_{8\ell}$ is not normal crossing near $[X]$. 
\end{cor}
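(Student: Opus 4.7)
The plan splits into two parts: (i) identifying the irreducible components of $\bar\gothH_{8\ell}$ passing through $[X]$, and (ii) ruling out normal crossing by comparing the Zariski tangent space $T^1_X$ with what a transversal intersection of two components along $\gothD$ would permit. Both parts rest on the deformation-theoretic machinery of the preceding section.

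Set $m = 2\ell+2$, so that $X$ is a double cover of $W = \IF_m$ branched over $B = 2\sigma_\infty + B'$ with $2a = 5m$. This places us precisely in case (3)(b) of Proposition~\ref{prop: deformation information}, yielding $\dim T^1_X = 12a - 15m - 1 = 15m - 1 = 30\ell + 29$; the sequence \eqref{eq: seq 1} combined with the vanishing $\alpha = 0$ from Proposition~\ref{prop: alpha for 2sigma}(2) and Lemma~\ref{lem: restriction to infinity} identifies $T^2_X$ with $H^1(B, \ko_B(B))$, of dimension $m - 1 = 2\ell + 1$.

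For step (i), I would invoke Proposition~\ref{prop: def preserves map for Horikawa surfaces}: every deformation of $X$ is induced by a deformation of the map $f\colon X \to W$ and therefore carries an accompanying deformation of the ambient Hirzebruch surface. Upper semi-continuity of $h^1(\kt_{\IF_{m'}}) = m' - 1$ forces a general neighboring fiber to have type $m' \leq m = 2\ell + 2$, so every irreducible component of $\bar\gothH_{8\ell}$ meeting $[X]$ is the closure of some stratum $\Hbar{m'}_{8\ell}$ with $m' \leq 2\ell + 2$. For $m' \leq 2\ell$, the irreducible stratum $\Hbar{m'}_{8\ell}$ is the closure of its non-empty classical open subset $\Hcan{m'}_{8\ell} \subset \gothH^{\mathrm I}_{8\ell}$ and is therefore contained in $\bar\gothH^{\mathrm I}$; for $m' = 2\ell + 2$, a dimension comparison via Proposition~\ref{prop: dim of strata} and Theorem~\ref{thm: classical stuff} yields $\Hbar{2\ell + 2}_{8\ell} = \bar\gothH^{\mathrm{II}}$. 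Combined with Theorem~\ref{thm: connection} (which places $[X]$ in $\bar\gothH^{\mathrm I}$ as well), this shows $[X]$ lies in exactly the two components $\bar\gothH^{\mathrm I}$ and $\bar\gothH^{\mathrm{II}}$.

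For step (ii), a dimension count closes the argument. Both components have dimension $28\ell+28$; since distinct irreducible components of the same dimension meet in strictly smaller dimension and the intersection already contains $\gothD$, we have $\dim(\bar\gothH^{\mathrm I} \cap \bar\gothH^{\mathrm{II}}) = 28\ell+27$ locally at $[X]$. Normal crossing at $[X]$ would force the Zariski tangent space to have dimension
\[\dim \bar\gothH^{\mathrm I} + \dim \bar\gothH^{\mathrm{II}} - \dim(\bar\gothH^{\mathrm I} \cap \bar\gothH^{\mathrm{II}}) = 28\ell + 29,\]
contradicting the computed value $\dim T^1_X = 30\ell + 29$ for every $\ell \geq 1$. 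I expect the semi-continuity step to be the most delicate point: one must verify that every local or formal deformation of $X$ genuinely corresponds to a deformation of the pair $(f\colon X \to W)$ with $W$ remaining a Hirzebruch surface, which is exactly what Proposition~\ref{prop: def preserves map for Horikawa surfaces} supplies at the generic point of $\gothD$.
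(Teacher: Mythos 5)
Your proposal is correct and takes essentially the same route as the paper: Proposition~\ref{prop: def preserves map for Horikawa surfaces} together with the fact that Hirzebruch surfaces only degenerate to lower type confines all small deformations of $X$ to $\bar\gothH^{\mathrm I}_{8\ell}\cup\bar\gothH^{\mathrm{II}}_{8\ell}$, and the tangent-space count from Proposition~\ref{prop: deformation information}(3b), $\dim T^1_X = 30\ell+29 > 28\ell+29$, is exactly the paper's obstruction to normal crossing (phrased there as $\dim T^1_X - \dim\gothH^{\mathrm I} = m-1>1$). The only cosmetic slip is that $\Hbar{2\ell+2}_{8\ell}$ is not equal to $\bar\gothH^{\mathrm{II}}_{8\ell}$ but is a dense (non-closed) subset whose closure is $\bar\gothH^{\mathrm{II}}_{8\ell}$.
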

\begin{proof}
 We know that every deformation of $X$ is a again a standard stable Horikawa surface (Proposition \ref{prop: def preserves map for Horikawa surfaces}) and that the type can only decrease, so $X$ can only deform to one of these two components. 
  If the moduli space were normal crossing at $[X]$, then $\dim T_{[X]}\bar\gothH_{k\ell} = \dim \gothH^\mathrm{I}_{k\ell}+1$, but by Proposition \ref{prop: deformation information} and Theorem \ref{thm: classical stuff} we compute 
  \[\dim T_{[X]}\bar\gothH_{k\ell}-\dim\gothH^\mathrm{I}_{k\ell} = \dim T^1_X - \dim\gothH^\mathrm{I}_{k\ell} + 1 + (m-2)>1.\]
 This proves the claim. 
\end{proof}

\subsection{Proof of Theorem B}
\label{sect: proof B}
We are now ready to prove Theorem B from the introduction.

By induction and Propositions \ref{prop: connect to previous even} and \ref{prop: connect to previous odd} we know that all subsets $\Hbar{m}_{2k}$ are contained in the connected component of the moduli space containing $\gothH_{2k}$. 

So let us now consider a general surface $X$ parametrised by $\Hbar{m}_{2k}$ for $2k>2m>k+4$. In particular, we may assume that the reduction of $\Hbar{m}_{2k}$ is smooth at $[X]$.

Then by Lemma \ref{lem: standard}, the surface $X$ is not classical but a  non-normal stable surface and by Proposition \ref{prop: def preserves map for Horikawa surfaces} every deformation of $X$ lifts to a deformation of the map $f\colon X \to \IF_m$. 
By the deformation theory of Hirzebruch surfaces, the type cannot increase in a neighbourhood of $X$ in $\bar\gothH_{2k}$, so all deformations of $X$ are contained in the union $\bigcup_{m'\leq m}\Hbar{m'}_{2k}$. But on the other hand, the union of all strata of smaller type has smaller dimension than $\Hbar{m}_{2k}$ by Corollary \ref{cor: increasing dimension trailing.} so that the general $X$ cannot lie in the closure of these components. In other words, all non-infinitesimal deformations of $X$ are again of type $(m$) and $\Hbar{m}_{2k}$ forms (an open subset of) an irreducible component of the moduli space. 

Endowing $\Hbar{m}_{2k}$ with the scheme structure defined by the moduli space of stable surfaces and noting that $X$ is Gorenstein, so every deformation is admissible, we show that the component is generically non-reduced by computing the dimension of the tangent space at the general point $X$:
\[ \dim T_{[X]} \Hbar{m}_{2k}=\dim T^1_X = \dim\Hbar{m}_{2k} + k-m+3> \dim\Hbar{m}_{2k}= \dim T_{[X]} \left(\Hbar{m}_{2k}\right)_\text{red}\]
by Proposition \ref{prop: deformation information} $(iv)$.
This concludes the proof. \hfill \qed

\bibliographystyle{plain}
\bibliography{references}

\end{document}